\newtheorem{lem}{Lemma}[section]
\newtheorem{pro}[lem]{Proposition}
\newtheorem{defi}[lem]{Definition}
\newtheorem{def/not}[lem]{Definition/Notations}
\newtheorem{thm}[lem]{Theorem}
\newtheorem{mthm}[lem]{Main Theorem}
\newtheorem{cor}[lem]{Corollary}
\newtheorem{rqe}[lem]{Remarks}
\newtheorem{exa}[lem]{Example}
\newcommand{\C}{\mathbb{C}}
\newcommand{\N}{\mathbb{N}}
\begin{document}

\begin{center}

{\Large\bf  Subextension and approximation of $m$-subharmonic functions with given boundary values}

\end{center}

\begin{center}

\large Hichame Amal \footnote{{Department of mathematics, Laboratory LaREAMI, Regional Centre of trades of education and
training, Kenitra Morocco E-mail: hichameamal@hotmail.com}} 
 and  \large Ayoub El Gasmi \footnote{{Ibn tofail university, faculty of sciences, department of mathematics, PO 242 Kenitra Morroco
E-mail: ayoub.el-gasmi@uit.ac.ma}}

\end{center}

\author{}

\vspace{2ex}

\begin{abstract} In this paper we establish a result on subextension of $m$-subharmonic functions  in the class $\mathcal{F}_m(\Omega,f)$  without changing the hessian measures. As application, we approximate an $m$-subharmonic function with given boudary value by an
increasing sequence of $m$-subharmonic functions defined on larger domains.
\\ 
    
    \vspace{1ex}
    
    \noindent{\it AMS Classification: 32U15, 32B15, 32W20.}\\
    \noindent{\it Keywords: Subextension of $m$-subharmonic functions. $m$-hyperconvex domain. Maximal $m$-subharmonic function. Approximation.}
\end{abstract}

\section{Introduction}
Let $\Omega$ and $\tilde{\Omega}$ be an $m$-hyperconvex domains of $\C^{n}$, $1\leq m\leq n$. The purpose of this paper is to study the subextension of $m$-subharmonic functions belonging to the class  $\mathcal{F}_m(\Omega,f)$ (see the next section for definition). We say that an $m$-subharmonic function $u$ on $\Omega$ ($u\in \mathcal{SH}_m(\Omega)$ for short) has a  \textit{subextension} if there exists a function $\tilde{u}\in\mathcal{SH}_m(\tilde{\Omega})$ such that  $\Omega\subset \tilde{\Omega}$ and $\tilde{u}\leq u$ in $\Omega$.
\par The case $m=n$ has been studied by many authors over the last forty years. EL Mir \cite{E} gave in 1980 an example of a plurisubharmonic function on the unit bidisc in $\mathbb{C}^2$ for which the restriction to any smaller bidisc admits no subextension to a larger domain. 
 He also proved that, after attenuatting the singularities of a given plurisubharmonic function by composition with a suitable convex increasing
function, it is possible to obtain a global subextension. Alexander and Taylor \cite{AT} gave in $1984$ a generalization of this result.
In $2003$, Cegrell and Zeriahi proved in \cite{CZ} that plurisubharmonic functions in the Cegrell class $\mathcal{F}(\Omega)$ admit a plurisubharmonic subextension to any larger bounded hyperconvex domain with a control of the
Monge-Ampère mass. The subextension fails  in general in the class $\mathcal{E}(\Omega)$, even in $\mathcal{N}(\Omega)$, as shown by Wiklund and  Lisa Hed in \cite{W} and \cite{H}. 
For the energy and weighted energy classes $\mathcal{E}_p(\Omega)$, $\mathcal{E}_\chi(\Omega)$, and  $\mathcal{E}^{\psi}(\Omega)$ the subextension problem was investigated respectively by Hiep, Benelkourchi, Hai and Tang Van Long (see \cite{Hi}, \cite{Ben1}, \cite{HL})
\par
    The problem of subextension of psh functions with boundary value, especially in the the class $\mathcal{F}(\Omega, H)$, has been studied by C\.zyz and Hed in \cite{CH}, by Hed in \cite{H}, and by Åhag and C\.zyz in \cite{AC}. Later on, Le Mau Hai and Nguyen Xuan Hong poved in \cite{MN} that one can give a subextension of a function in the class without changing the Monge-Ampere measure. In \cite{A}, the first author studied the extension in the class $\mathcal{E}_{\chi}(\Omega, H)$.
\par A few years ago, subextension of $m$-subharmonic functions in the class $\mathcal{F}_{m,\chi}(\Omega)$ was studied  by Hung in \cite{VV}, who proved that an $m$-subharmonic function in $\mathcal{F}_{m,\chi}(\Omega)$, under the assumption that the $m$-hyperconvex domains $\Omega$ and $\tilde{\Omega}$ are  such that $\Omega$ is relatively compact in $ \tilde{\Omega},$ have a subextension in $\mathcal{F}_{m,\chi}(\tilde{\Omega})$. This result has been ameliorated recently by Mau hai and Dung in \cite{MV}, they proved the existence of a subextension in $\mathcal{F}_m(\Omega)$ where $\Omega$ does not have to be relatively compact in $\tilde{\Omega}$, they further showed an equality of the complex Hessian
measures of subextension and initial function. 
\par Motivated an inspired with the above results and techniques, our aim in this paper is to study the  subextension problem in the class $\mathcal{F}_m(\Omega,f).$ Namely, we prove the following:
\begin{mthm}
  Let $\Omega$ and $\tilde{\Omega}$ be a $m$-hyperconvex domains such that $\Omega\subset\tilde{\Omega}$. Given $f\in\mathcal{E}_m(\Omega)$ and $g\in\mathcal{MSH}_m(\tilde{\Omega})\cap\mathcal{E}_m (\tilde{\Omega})$ that satisfy $g \leq f$ on $\Omega$. If $u\in \mathcal{F}_m(\Omega,f)$ and $\int_{\Omega}H_m(u) < +\infty$,
then there exists a function $\tilde{u}\in\mathcal{F}_m(\tilde{\Omega},g)$ such that $\tilde{u}\leq u$  on $\Omega$, and $H_m(\tilde{u}) =
\mathds{1}_{\Omega}H_m(u).$
\end{mthm}
As application of this result and using an approximation Theorem in the classes $\mathcal{F}_m(\Omega)$ proved  recently by Nguyen Van Phu and Nguyen Quang Dieu in \cite{PD}, 
we give the following Theorem:
\begin{thm}
Let $\Omega\subset\Omega_{j+1}\subset\Omega_j$
be an $m$-hyperconvex domains such that $\lim_{j\to +\infty}\mathrm{cap}_m(\Omega_j,K)=\mathrm{cap}_m(\Omega,K)$ for all compact subset
$K\subset\Omega$, and that $g\in \mathcal{MSH}_m^{-}(\Omega_1)$. Then, to every function
$u\in \mathcal{F}_m(\Omega,g_{|\Omega})$, such that
$\int_{\Omega}H_m(u) < +\infty$,
there exists an increasing sequence of functions $u_j\in\mathcal{F}_m(\Omega_j,g_{|\Omega_j})$ such that
$\lim_{j\to +\infty} u_j=u$ almost everywhere on $\Omega$.
\end{thm}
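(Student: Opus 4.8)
The plan is to build the sequence $(u_j)$ one domain at a time with the Main Theorem, to force monotonicity by passing to maximal subextensions, and finally to identify the increasing limit with $u$ by matching Hessian measures. First I would fix $j$ and apply the Main Theorem with the larger domain $\tilde\Omega=\Omega_j$, the boundary datum $f=g_{|\Omega}$ on $\Omega$, and the maximal function $g_{|\Omega_j}$ on $\Omega_j$. The hypotheses are immediate: since $g\in\mathcal{MSH}_m^{-}(\Omega_1)$, its restrictions $g_{|\Omega}$ and $g_{|\Omega_j}$ are negative maximal $m$-subharmonic functions, hence lie in $\mathcal{E}_m(\Omega)$ and in $\mathcal{MSH}_m(\Omega_j)\cap\mathcal{E}_m(\Omega_j)$ respectively; moreover $g_{|\Omega_j}=g_{|\Omega}$ on $\Omega$, so the domination $g_{|\Omega_j}\le g_{|\Omega}$ on $\Omega$ holds trivially. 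The Main Theorem then supplies $\tilde u_j\in\mathcal{F}_m(\Omega_j,g_{|\Omega_j})$ with $\tilde u_j\le u$ on $\Omega$ and $H_m(\tilde u_j)=\mathds{1}_{\Omega}H_m(u)$.

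To obtain monotonicity I would not keep the raw $\tilde u_j$ but instead pass to the largest subextension
\[
u_j:=\Big(\sup\big\{\varphi\in\mathcal{SH}_m(\Omega_j):\ \varphi\le g_{|\Omega_j}\ \text{on}\ \Omega_j,\ \varphi\le u\ \text{on}\ \Omega\big\}\Big)^{*}.
\]
Since $\tilde u_j$ is one competitor, $\tilde u_j\le u_j\le g_{|\Omega_j}$, so the squeeze keeps $u_j$ in $\mathcal{F}_m(\Omega_j,g_{|\Omega_j})$, and a comparison-principle argument (equal total masses on $\Omega$ between $\tilde u_j\le u_j\le u$) shows $H_m(u_j)=\mathds{1}_{\Omega}H_m(u)$ as well. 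The virtue of this definition is its stability under restriction: because $\Omega_{j+1}\subset\Omega_j$, every competitor for $u_j$ restricts to a competitor for $u_{j+1}$, whence $u_j\le u_{j+1}$ on $\Omega_{j+1}$. Thus $(u_j)$ is increasing and bounded above by $u$ on $\Omega$, and its limit $u_\infty:=(\lim_j u_j)^{*}$ satisfies $u_\infty\le u$ on $\Omega$.

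It remains to prove that $u_\infty=u$ almost everywhere on $\Omega$, which is the heart of the matter and the step I expect to be the main obstacle. Here I would invoke the approximation theorem of Phu and Dieu \cite{PD}: the hypothesis $\lim_j \mathrm{cap}_m(\Omega_j,K)=\mathrm{cap}_m(\Omega,K)$ for every compact $K\subset\Omega$ is precisely the quantitative control preventing Hessian mass from escaping towards $\partial\Omega$ as $\Omega_j\downarrow\Omega$. Combining this with the continuity of $H_m$ along increasing sequences and the identities $H_m(u_j)=\mathds{1}_{\Omega}H_m(u)$, I would conclude that $u_\infty\in\mathcal{F}_m(\Omega,g_{|\Omega})$ with $H_m(u_\infty)=H_m(u)$ on $\Omega$, rather than landing in a strictly larger class with diminished mass. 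Since then $u_\infty\le u$ carry the same boundary datum $g_{|\Omega}$ and the same Hessian measure, the uniqueness of solutions to the Dirichlet problem in $\mathcal{F}_m(\Omega,g_{|\Omega})$ (the comparison principle) forces $u_\infty=u$ a.e., which completes the argument. Without the capacity hypothesis and the Phu–Dieu estimate the increasing limit could drop strictly below $u$ on a set of positive measure, so these are exactly the ingredients that make the last step work.
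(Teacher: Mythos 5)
Your scaffolding coincides with the paper's: the paper also defines $u_j$ as the maximal subextension $\sup\{\varphi\in \mathcal{E}_m(\Omega_j):\ \varphi\leq g_{|\Omega_j}\ \mbox{on}\ \Omega_j,\ \varphi\leq u\ \mbox{on}\ \Omega\}$, gets $u_j\in\mathcal{F}_m(\Omega_j,g_{|\Omega_j})$ with $H_m(u_j)\leq\mathds{1}_{\Omega}H_m(u)$ (via Step 1 of the subextension theorem; your use of the full Main Theorem plus the squeeze $\tilde u_j\leq u_j\leq g_{|\Omega_j}$ is an acceptable variant), notes the sequence increases, and finishes with the measure inequality $H_m(h)\leq H_m(u)$ for $h=(\lim_j u_j)^*$, the reverse total-mass inequality from $h\leq u$ (\cite[Lemma 2.14]{G2}), and the uniqueness theorem \cite[Theorem 2.10]{G1}. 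But there is a genuine gap at exactly the step you yourself call the heart of the matter: before either the mass-comparison lemma or the uniqueness theorem can be applied, you must know $u_\infty\in\mathcal{F}_m(\Omega,g_{|\Omega})$, i.e.\ exhibit $\phi\in\mathcal{F}_m(\Omega)$ with $u_\infty\geq \phi+g_{|\Omega}$ on $\Omega$. You assert this follows from the Phu--Dieu theorem plus ``continuity of $H_m$ along increasing sequences,'' but you never say to which function Phu--Dieu is applied, and no measure-theoretic information ($H_m(u_j)=\mathds{1}_{\Omega}H_m(u)$, no mass escaping, etc.) can by itself force class membership: membership in $\mathcal{F}_m(\Omega,f)$ is a constraint on boundary behaviour, not on the Hessian measure. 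Your reading of the capacity hypothesis as ``preventing Hessian mass from escaping'' is in fact a misdiagnosis, since mass cannot escape anyway: $H_m(u_j)\leq \mathds{1}_{\Omega}H_m(u)$ holds with or without that hypothesis. Note also that the crude lower bound you do have, $u_\infty\geq u_1\geq \varphi_1+g_{|\Omega_1}$ with $\varphi_1\in\mathcal{F}_m(\Omega_1)$, does not help, because the restriction of a function in $\mathcal{F}_m(\Omega_1)$ to $\Omega$ need not belong to $\mathcal{F}_m(\Omega)$.

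The missing mechanism --- and the actual role of the capacity hypothesis in the paper --- is this: write $g_{|\Omega}\geq u\geq\psi+g_{|\Omega}$ with $\psi\in\mathcal{F}_m(\Omega)$ (the definition of $u\in\mathcal{F}_m(\Omega,g_{|\Omega})$), and apply \cite[Theorem 4.1, $(v)\to(ii)$]{PD} to the potential $\psi$, not to $u$; this is precisely where $\lim_{j}\mathrm{cap}_m(\Omega_j,K)=\mathrm{cap}_m(\Omega,K)$ enters, and it produces an increasing sequence $\psi_j\in\mathcal{F}_m(\Omega_j)$ with $\psi_j\to\psi$ quasi-everywhere on $\Omega$. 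Each $\psi_j+g_{|\Omega_j}$ is then a competitor in your sup defining $u_j$ (it is $\leq g_{|\Omega_j}$ on $\Omega_j$ and $\leq\psi+g_{|\Omega}\leq u$ on $\Omega$), so $\psi_j+g_{|\Omega_j}\leq u_j\leq g_{|\Omega_j}$; letting $j\to+\infty$ gives $\psi+g_{|\Omega}\leq u_\infty\leq g_{|\Omega}$ quasi-everywhere, hence everywhere, and therefore $u_\infty\in\mathcal{F}_m(\Omega,g_{|\Omega})$. With this lower bound in hand, the rest of your outline (measure inequality from increasing convergence, reverse mass inequality from $u_\infty\leq u$, uniqueness) closes the proof exactly as in the paper; without it, the argument does not go through.
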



\par The paper is organized as follows. In a preliminary section (section \ref{sec2}), we give some results about $m$-subharmonic functions, Cegrell classes and maximality. The section \ref{sec3} is devoted to the proof of the subextension Theorem. We give an example of a $m$-sh function in the class $\mathcal{N}_m(\Omega)\setminus\mathcal{F}_m(\Omega)$  which has no subextension.  Finally, in section \ref{sec4}, we will prove the approximation Theorem.
\section{Prelimenaries}\label{sec2}
\subsection{The class of $m$-subharmonic functions}
In this subsection, we recall briefly some basic properties of admissible functions for the complex Hessian operator, such functions are called $m$-subharmonic ($m$-sh for short) and $\mathcal{SH}_m(\Omega)$ denote the set of all such functions. They are in particular subharmonic and non-smooth in general. 
\par Let $1\leq m\leq n.$ The elementary symmetric function $S_m$ is defined by 
$$S_m(\lambda)=\sum_{1\leq j_1<\cdots< j_m\leq n}\lambda_{j_{1}}\ldots\lambda_{j_{m}},$$
 which can be also determined by
                 $(\lambda_1+t)\cdots(\lambda_n+t)=\sum_{m=0}^{n}S_m(\lambda)t^{n-m}, \;\hbox{with}\;\;\; t\in \mathbb{R}.$
Let $\Gamma_m$ be the closure of the  connected component of $\{\lambda\in \mathbb{R}^n:\; S_m(\lambda)>0\}$ containing $(1,\cdots,1).$ 

Let $(a_{j\bar{k}})$ be any  complex hermitian matrix $n\times n$ and  let $\alpha=\dfrac{i}{2}\sum_{j,k}a_{j\bar{k}}dz_j\wedge d\bar{z}_k$
be the correspondent real differential form of bidegree $(1,1)$. Denote by  $\lambda(\alpha)=(\lambda_1(\alpha),\cdots,\lambda_n(\alpha))\in \mathbb{R}^{n}$, the vector of the eigenvalues of $(a_{j\bar{k}})$. One can define  
                                $\tilde{S}_m(\alpha)=S_m(\lambda(\alpha)).$
This relation between $(1,1)$-forms and vectors in $\mathbb{R}^{n}$ allows us to define the set
$$\tilde{\Gamma}_m:=\{\alpha\in\mathbb{C}_{(1,1)}:\; \lambda(\alpha)\in \Gamma_m\}=\{\alpha:\;\tilde{S}_k(\alpha)\geq0, \forall \;1\leq k\leq m\},$$
where $\mathbb{C}_{(1,1)}$ is the space of real $(1,1)$-forms with constant coefficients in $\mathbb{C}^n$. After diagonalizing the matrix $(a_{j\bar{k}})$ we see that
           $\alpha^m\wedge \beta^{n-m}=\frac{m!(n-m)!}{n!}\tilde{S}_m(\alpha)\beta^{n}.$
We define
   $\breve{\Gamma}_m:=\{\alpha\in \mathbb{C}_{1,1}:\; \alpha\wedge \beta^{n-1}\geq0,\alpha^2\wedge \beta^{n-2}\geq0,\cdots,\alpha^m\wedge \beta^{n-m}\geq0\}.$
 A $(1,1)$-form belonging to $\breve{\Gamma}_m$ is called $m$-positive.
 If $T$ is a current  of bidegree $(n-k, n-k)$, $k \leq m$, then $T$ is called $m$-positive if for all $m$-positive $(1,1)$-forms $\alpha_1,...,\alpha_{k}$ we have 
$\alpha_1\wedge\alpha_2\wedge\ldots\wedge\alpha_k\wedge T\geq 0.$\\
In connection with the results above, we recall the definition and some basic propreties of $m$-sh functions. Here $d=\partial+\overline{\partial}$ is the standard operator of exterior differentiation while $d^{c}=\dfrac{1}{2i\pi}(\partial-\overline{\partial}).$
\begin{defi}
Let $u$  be a subharmonic function defined on a bounded domain $\Omega\subset\mathbb{C}^{n}.$
\begin{itemize}
  \item[$i)$] If $u\in C^2(\Omega)$, then $u$ is called $m$-sh if $dd^cu$ belongs pointwise to $\breve{\Gamma}_m.$
  \item[$ii)$] For non-smooth case, $u$ is  $m$-sh if for all $\alpha_1,\ldots,\alpha_{m-1}\in\breve{\Gamma}_m,$ the  inequality
   $$dd^cu\wedge\alpha_1\ldots \wedge \alpha_{m-1}\wedge \beta^{n-m}\geq0,\;\;\;\;\;\;$$
holds in the weak sense of currents in $\Omega$.
\end{itemize}
\end{defi}
\begin{defi}
 We say that a function $u$ is strictly $m$-sh on $\Omega$ if it is $m$-sh on $\Omega$, and for every $P\in\Omega$ there exists a constant  $C_P$ such that the function $z\longmapsto u(z)-C_{P}\vert z\vert ^{2}$ is $m$-sh in a neighborhood of $P$.
\end{defi}
\begin{pro}\cite{Blo}.
\begin{enumerate}
  \item[$1)$] $\mathcal{PSH}=\mathcal{SH}_{n}\subset\mathcal{SH}_{n-1}\subset\ldots\subset\mathcal{SH}_1\subset \mathcal{SH}$.
  \item[$2)$] If $u, v\in \mathcal{SH}_m(\Omega)$ then $\lambda u+ \mu v \in \mathcal{SH}_m(\Omega)$, $\forall \lambda, \mu\geq 0.$
  \item[$3)$] The limit of a decreasing sequence of $m$-sh functions is an $m$-sh function.
  \item[$4)$] If $u\in \mathcal{SH}_m(\Omega)$ and $f$ is a convex increasing function, then $f\circ u\in \mathcal{SH}_m(\Omega).$
  \item[$5)$] If $u\in \mathcal{SH}_m(\Omega)$, then the standard regularization $u\ast\rho_{\epsilon}\in\mathcal{SH}_m(\Omega_{\epsilon})$, where $\Omega_{\epsilon}:=\{z\in\Omega:\; dis(z,\partial\Omega)>\epsilon) \}$, for $0<\epsilon\ll1.$
  \item[$6)$] If $(u_j)\subset\mathcal{SH}_m(\Omega)\cap L_{\ell oc}^{\infty}(\Omega),$ then $(\sup_j u_j)^{*}\in\mathcal{SH}_m(\Omega)$, where $\theta^{*}$ denotes the upper semicontinuous regularisation of $\theta.$
\end{enumerate}
\end{pro}

For locally bounded $m$-sh functions $u_1,\ldots,u_k$, $k\leq m,$ defined on a bounded domain $\Omega\subset\mathbb{C}^{n}$, one can inductively define a closed nonnegative current as
$dd^cu_1\wedge\ldots\wedge dd^cu_k\wedge\beta^{n-m}:=dd^c(u_1dd^cu_2\wedge\ldots\wedge dd^cu_k\wedge\beta^{n-m}).$
In particular, for a function $u\in \mathcal{SH}_m(\Omega)\cap L_{\ell oc}^{\infty}(\Omega)$, the complex $m$-Hessian measure  is defined by:
$$H_m(u)=(dd^cu)^m\wedge\beta^{n-m}.$$
\begin{defi}
Let $\Omega$ be a bounded domain in $\mathbb{C}^{n}$. 
\begin{itemize}
\item[$1)$]  We say that $\Omega$ is $m$-hyperconvex if there exists a continuous $m$-sh function $\varphi:\; \Omega \rightarrow \mathbb{R}^{-}$ such that $\{\varphi<c\}\Subset\Omega$, for every $c<0.$
\item[$2)$]  We say that $\Omega$ is strongly $m$-hyperconvex if there exists an $m$-sh function $\rho$ defined on some open neighborhood $\Omega^{\prime}$ of $\bar \Omega$ such that $\Omega= \{z\in\Omega^{\prime}:\; \rho(z) < 0\}$.
\item[$3)$]  We say that $\Omega$ is strictly $m$-pseudoconvex if there exists a smooth strictly $m$-sh function $\rho$ on some open neighborhood $\Omega^{\prime}$ of $\bar \Omega$ such that $\Omega= \{z\in\Omega^{\prime}:\; \rho(z) < 0\}$.
\end{itemize}
\end{defi}
\subsection{Cegrell's classes for $m$-subharmonic functions}
\par From now on, $\Omega$ is assumed to be an $m$-hyperconvex domain of $\mathbb{C}^{n}.$
\begin{itemize}
    \item[$\bullet$] We denote $\mathcal{E}^0_m(\Omega)$  the class of bounded $m$-sh functions $u$  on $\Omega$ such that
$\displaystyle\lim_{z\rightarrow \xi}u(z)=0$, $\forall\xi\in\partial\Omega$ and $\int_{\Omega}H_m(u)<+\infty.$
\item[$\bullet$] Let $u \in \mathcal{SH}_{m}^{-}(\Omega)$, we say that $u$  belongs to $\mathcal{E}_m(\Omega)$ if for each $z_0 \in \Omega$, there exists an open neighborhood $U \subset \Omega$ of $z_0$ and a decreasing sequence $(u_j)$ in $\mathcal{E}_m^0$ such that 
  $u_j \downarrow u$ on $U$ and $\sup_j\int_\Omega H_m(u_j)<+\infty.$
  \item[$\bullet$]  We denote by $\mathcal{F}_m(\Omega)$ the class of functions $u \in \mathcal{SH}_{m}^{-}(\Omega)$ such that there exists a sequence $(u_j) \subset \mathcal{E}_m^0(\Omega)$  decreasing to $u$ in $\Omega$ and $\sup_j\int_\Omega H_m(u_j)<+\infty.$
\end{itemize}
Let $u\in\mathcal{E}_m(\Omega)$, it follows from \cite[Theorem 3.14]{Ch1} that if $(u_j)$ is sequences in $\mathcal{E}_{m}^{0}(\Omega)$, decreasing to $u$, then the sequence of measures $H_m(u_{j})$ converges weakly to a positive Radon measure which does not depend on the choice of the sequence $(u_{j})_j.$ One then can define $H_m(u)$ to be this weak limit. Therefore, if $u\in\mathcal{E}_m(\Omega)$, then  $H_m(u)$ is well defined and it is a Radon measure on $\Omega$. Note also that the class $ \mathcal{E}_m(\Omega)$ is the largest  subset of $\mathcal{SH}_{m}^{-}(\Omega)$ on which the complex $m$-Hessian operator $H_m$ is
well defined and continuous under decreasing limits (see \cite{Ch1}).
\begin{defi}{\textbf{(Maximal $m$-sh functions).}}
A function $u\in\mathcal{SH}_m(\Omega)$ is called maximal if for any domain $G\Subset \Omega,$ the inequality  $v\leq u$ holds in $G$ for all $v\in\mathcal{SH}_m(\Omega)$ satisfying  $v_{\vert\partial G}\leq u_{\vert\partial G}.$ \end{defi}
\par We denote by $\mathcal{MSH}_{m}(\Omega)$ the family of all these functions. Note that it follows from  \cite{Blo} and \cite{V} that an $m$-sh function $u$ belonging to $\mathcal{E}_m(\Omega)$ is maximal if and only if $H_{m}(u)=0.$ For $m=1$, maximal subharmonic functions coincide with harmonic functions.
\begin{defi}{\textbf{($m$-polar sets).}}
A set $P\subset\mathbb{C}^{n}$ is called $m$-polar if for any $z\in P$ there exists
a neighborhood $V$ of $z$ and a function $u\in \mathcal{SH}_m(V)$ such that $P\cap V\subset \{u=-\infty\}.$
\end{defi}
\par We give the definition and some propreties of the class  $\mathcal{N}_m(\Omega)$ that was first defined in \cite{G2} and originate from \cite{Ceg2}.
From now on, $(\Omega_j)$ is assumed to be a fundamental sequence of $\Omega$, this is a sequence of
strictly $m$-pseudoconvex domains such that $\Omega_j\Subset\Omega_{j+1}$ for every $j\in \N$ and $\cup_j \Omega_j=\Omega.$
\begin{defi}\label{120}
Let $u \in \mathcal{SH}_{m}(\Omega)$. Define $u^j:=\sup\left\{\phi \in \mathcal{SH}_{m}(\Omega):\; \phi\leq u\; \hbox{on}\;\Omega\setminus\Omega_j\right\},$
and let $\widetilde{u}:=(\displaystyle\lim_{j\rightarrow+\infty}u^j)^{*}.$
\end{defi}
Note that the definition of $u^{j}$ is independent of the exhaution $(\Omega_j)$ and that $u\leq u^{j}\leq u^{j+1}$, hence  $\sup_{j} u^{j}=\displaystyle\lim_{j\rightarrow+\infty}u^j$, which yields that $\widetilde{u}=(\displaystyle\lim_{j\rightarrow+\infty}u^j)^{*}\in\mathcal{SH}_{m}(\Omega).$ Moreover, if $u\in \mathcal{E}_{m} $ then $\widetilde{u}\in\mathcal{E}_{m}$.
\begin{pro}\cite[Poroposition 2.7]{G2}
Let $u\in\mathcal{E}_m(\Omega)$, then $\tilde{u}$ is maximal on $\Omega$. It is the smallest maximal $m$-sh majorant of $u$ in $\Omega$.
\end{pro}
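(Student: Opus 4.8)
The statement has two parts: that $\tilde u$ is maximal, and that it is the least maximal majorant of $u$. Since $u$ is itself admissible in the envelope defining $u^j$ (trivially $u\le u$ on $\Omega\setminus\Omega_j$), we have $u\le u^j\le\tilde u$, so $\tilde u$ is automatically a majorant of $u$; the content lies in the maximality and the minimality. For the first assertion the plan is to use the characterization recalled above — a function of $\mathcal{E}_m(\Omega)$ is maximal if and only if its Hessian measure vanishes — and to establish $H_m(\tilde u)=0$. For the second assertion I would argue directly from the definitions of $u^j$ and of maximality, with no measure theory at all.

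First I would show that each $u^j$ is maximal on $\Omega_j$ by a balayage argument. If it were not, there would be a ball $B\Subset\Omega_j$ on which $u^j$ fails to be maximal; solving the homogeneous Dirichlet problem on $B$ with boundary data $u^j|_{\partial B}$ (the Perron--Bremermann $m$-sh envelope) produces $U\in\mathcal{SH}_m(B)$ with $H_m(U)=0$, $U\ge u^j$ on $B$, $U=u^j$ on $\partial B$, and $U>u^j$ somewhere. Gluing $U$ on $B$ with $u^j$ on $\Omega\setminus B$ yields $\hat u\in\mathcal{SH}_m(\Omega)$ with $\hat u\ge u^j$ and $\hat u=u^j\le u$ on $\Omega\setminus\Omega_j$, so $\hat u$ is a competitor in the definition of $u^j$, forcing $\hat u\le u^j$, a contradiction. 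Thus $u^j$ is maximal on $\Omega_j$, and since $u\le u^j\le 0$ with $u\in\mathcal{E}_m(\Omega)$, the function $u^j$ lies in $\mathcal{E}_m(\Omega)$, whence $H_m(u^j)=0$ on $\Omega_j$. To pass to $\tilde u$, fix a compact $K\Subset\Omega$ and choose $j_0$ with $K\Subset\Omega_{j_0}$; for every $j\ge j_0$ we have $\Omega_{j_0}\subset\Omega_j$, so $H_m(u^j)=0$ on $\Omega_{j_0}$. As $u^j\uparrow\tilde u$ with $\tilde u=(\lim_j u^j)^*\in\mathcal{E}_m(\Omega)$, the measures $H_m(u^j)$ converge weakly to $H_m(\tilde u)$, and testing against nonnegative $\chi\in C_c^\infty(\Omega_{j_0})$ gives $\int\chi\,H_m(\tilde u)=\lim_j\int\chi\,H_m(u^j)=0$. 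Hence $H_m(\tilde u)=0$ on $\Omega_{j_0}$, so on $K$, and, $K$ being arbitrary, $H_m(\tilde u)=0$ on $\Omega$, i.e. $\tilde u$ is maximal.

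For minimality, let $w\in\mathcal{MSH}_m(\Omega)$ satisfy $w\ge u$. For any competitor $\phi$ in the definition of $u^j$ one has $\phi\le u\le w$ on $\Omega\setminus\Omega_j$, in particular $\phi\le w$ on $\partial\Omega_j$. Applying the maximality of $w$ with $G=\Omega_j\Subset\Omega$ gives $\phi\le w$ on $\Omega_j$, so $\phi\le w$ throughout $\Omega$. Taking the supremum over all such $\phi$ yields $u^j\le w$ for every $j$, and letting $j\to+\infty$ together with the upper semicontinuity of $w$ gives $\tilde u=(\lim_j u^j)^*\le w$. Therefore $\tilde u$ is the smallest maximal $m$-sh majorant of $u$.

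I expect the main obstacle to be the weak continuity of $H_m$ along the increasing sequence $u^j\uparrow\tilde u$ invoked in the second paragraph: for unbounded functions in $\mathcal{E}_m(\Omega)$ this monotone convergence is delicate and must be drawn from the Cegrell-type theory for $m$-subharmonic functions, which is also the natural source of the stability property $\mathcal{E}_m\ni u\le v\le 0\Rightarrow v\in\mathcal{E}_m$ used to place $u^j$ in $\mathcal{E}_m(\Omega)$. Everything else reduces to the gluing lemma, solvability of the local Dirichlet problem on balls, and the bare definition of maximality.
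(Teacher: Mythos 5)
The paper never proves this proposition: it is stated in the preliminaries as a known result, going back to Cegrell's construction of $\widetilde{u}$ in the plurisubharmonic case \cite{Ceg2} and to its $m$-subharmonic analogues in \cite{V} and \cite{G1}. So there is no in-paper proof to compare against; judged on its own, your argument is correct, and it is in substance the standard one: maximality of each $u^j$ on $\Omega_j$ by a balayage/gluing argument, passage to $\widetilde{u}$ via the increasing-convergence theorem for $H_m$ within $\mathcal{E}_m$, and minimality read off directly from the definition of maximality applied with $G=\Omega_j$. Two points should be ordered or cited more carefully. First, the reduction ``not maximal on $\Omega_j$ $\Rightarrow$ not maximal on some ball $B\Subset\Omega_j$'' does not follow from the bare definition of maximality (which quantifies over arbitrary subdomains $G$); it uses the characterization maximal $\Leftrightarrow H_m=0$ inside $\mathcal{E}_m$ (\cite{Blo}, \cite{V}) together with the fact that $u^j\in\mathcal{E}_m(\Omega)$ because $u\le u^j\le 0$ --- so that membership and that characterization must come \emph{before} the localization, not after. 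In fact the contradiction is avoidable: for any ball $B\Subset\Omega_j$, the Perron--Bremermann envelope $U$ on $B$ with boundary data $u^j$, glued with $u^j$ outside $B$, is a competitor in the definition of $u^j$ (it is $\le u\,$ on $\Omega\setminus\Omega_j$), hence $\le u^j$, while $U\ge u^j$ by construction; thus $H_m(u^j)=H_m(U)=0$ on $B$ directly. Second, the gluing across $\partial B$ needs $\limsup_{z\to\xi}U(z)\le u^j(\xi)$ for $\xi\in\partial B$, which for merely upper semicontinuous boundary data is obtained by approximating $u^j|_{\partial B}$ from above by continuous functions; this, like the monotone convergence theorem for $H_m$ in $\mathcal{E}_m$ that you flag yourself (see \cite{Ch1}, \cite{HP}), is standard but should be attributed. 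With these adjustments your proof is complete.
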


\begin{rqe}\label{b}
\begin{enumerate}
\item[$i)$]  Let $u, v \in\mathcal{E}_{m}(\Omega)$ and $\alpha\in \mathbb{R}^{+}$, then $\widetilde{u+v}\geq \widetilde{u}+\widetilde{v}$ and  $\widetilde{\alpha u}=\alpha\widetilde{u}$. Moreover if  $u\leq v$ then $\widetilde{u}\leq\widetilde{v}.$
\item[$ii)$] $\mathcal{E}_{m}(\Omega)\cap \mathcal{MSH}_{m}(\Omega)=\{u\in\mathcal{E}_{m}(\Omega):\;\widetilde{u}=u\}.$
\end{enumerate}
\end{rqe} 
\begin{defi}
Let $u\in\mathcal{E}_{m}(\Omega),$ we say that $u\in\mathcal{N}_m(\Omega)$ if $\widetilde{u}=0.$
\end{defi}
By the remark above we derive  that $\mathcal{N}_m(\Omega)$ is a convex cone. Note also that we have the following inclusions 
$\mathcal{F}_m(\Omega)\subset \mathcal{N}_m(\Omega)\subset \mathcal{E}_m(\Omega).$
\begin{defi}
Let $\mathcal{K}_m(\Omega) \in \{\mathcal{E}_{m}^{0}(\Omega), \mathcal{F}_m(\Omega), \mathcal{N}_m(\Omega)\}$, and $f\in \mathcal{E}_m(\Omega)$. We say that an $m$-sh function $u$ defined on $\Omega$ belongs to $\mathcal{K}_m(\Omega,f)$ if there exists a function $\varphi \in \mathcal{K}_m(\Omega)$ such that $f\geq u\geq \varphi+f.$  
\end{defi}
Note that $\mathcal{K}_m(\Omega,0)=\mathcal{K}_m(\Omega)$ and that the complex Hessian measure of a function belonging to the class $\mathcal{K}_m(\Omega,f)$ does not necessarily have to be finite. Observe also that we have the following inclusions 
$$\mathcal{E}^{0}_m(\Omega,f)\subset\mathcal{F}_m(\Omega,f)\subset\mathcal{N}_m(\Omega,f)\subset\mathcal{E}_m(\Omega).$$
We need in the sequel the following  identity principle in the class $\mathcal{N}_{m}(f)$.
\begin{thm}\label{31}(\textbf{The Identity Principle}).
Let $f \in \mathcal{E}_m.$ If $u,v\in\mathcal{N}_m(f)$ such that $u\leq v$, $H_m(u)=H_m(v)$ and $\displaystyle\int_{\Omega}(-\omega)H_m(u)<+\infty$ with $\omega \in\mathcal{E}_m$, then $u=v$ on $\Omega$.
\end{thm}
The proof is an immediate consequence of the following lemma
\begin{lem}\label{80}
Let $f \in \mathcal{E}_m$ and $u, v \in \mathcal{N}_m(f)$ such that $u\leq v.$ Then for all
$\omega_j \in \mathcal{SH}_m(\Omega)\cap L^{\infty}(\Omega),$ $-1\leq \omega_j\leq0,$ $j=1,...,m$, $\displaystyle\int_{\Omega}(-\omega_1)H_m(u)<+\infty$, we have that the following inequality holds:
\begin{equation}\label{20}
  \frac{1}{m!}\displaystyle\int_{\Omega}(v-u)^{m}dd^{c}\omega_{1}\wedge ...\wedge dd^{c}\omega_{m}\wedge \beta^{n-m}
 + \displaystyle\int_{\Omega}(-\omega_{1})H_m(v)\leq  \displaystyle\int_{\Omega}(-\omega_{1})H_m(u).
\end{equation}
\end{lem}
\begin{proof}
Suppose first that $u, v \in \mathcal{E}^0_m(f).$ Then by definition there exists a  function $\varphi \in \mathcal{E}^0_m$ such that
$f\geq u\geq  f+\varphi.$ For each $\varepsilon>0$ small  enough we can choose $K\Subset\Omega$ such that $\varphi\geq -\varepsilon$ on $\Omega\setminus K$. Hence,
                $$u\geq\varphi+f\geq-\varepsilon+f\geq-\varepsilon+v \;\;\;\hbox{on}\;\;\; \Omega\setminus K,$$
Put $\widehat{u}=\max(u,v-\varepsilon)$, then $\widehat{u}=u$ on $\Omega\setminus K.$ It follows from  \cite[(2) in Lemma 5.4]{V} that
\begin{equation*}
\frac{1}{m!}\displaystyle\int_{\Omega}(\widehat{u}-u)^{m}dd^{c}\omega_{1}\wedge ...\wedge dd^{c}\omega_{m}\wedge \beta^{n-m}
+ \displaystyle\int_{\Omega}(-\omega_{1})H_m(\widehat{u})\leq \displaystyle\int_{\Omega}(-\omega_{1})H_m(u).
\end{equation*}
By letting $\varepsilon\rightarrow 0^+$ (\ref{20}) holds.
\par  Let now $u, v \in \mathcal{N}_m(f),$ then by Proposition \cite[Proposition 2.12]{G2} there exist two decreasing sequences $(u_j)$, $(v_k)$ such that $u_j, v_k \in\mathcal{E}^0_m(f)$ that converge pointwise to $u$ and $v$ respectively. By taking $\max\{u_j,v_j\}$ we can suppose $u_j\leq v_j$. Take $k\leq j$, then $v_k\geq v_j\geq u_j$,  so using the first part we get
 \begin{equation*}
\frac{1}{m!}\int_{\Omega}(v_k-u_j)^{m}dd^{c}\omega_{1}\wedge ...\wedge dd^{c}\omega_{m}\wedge \beta^{n-m}
 + \int_{\Omega}(-\omega_{1})H_m(v_k)\leq \int_{\Omega}(-\omega_{1})H_m(u_j).
\end{equation*}
Letting $j$ tends to $\infty$ and applying \cite[Proposition 2.13]{G2} we get 
\begin{equation*}
\frac{1}{m!}\int_{\Omega}(v_k-u)^{m}dd^{c}\omega_{1}\wedge ...\wedge dd^{c}\omega_{m}\wedge \beta^{n-m}
 + \int_{\Omega}(-\omega_{1})H_m(v_k)\leq \int_{\Omega}(-\omega_{1})H_m(u).
\end{equation*} 
Finally by letting $k$ tends to $\infty$ and  applying again \cite[Proposition 2.13]{G2} we get the desired inequality
\end{proof}

\section{Subextension in the class $\mathcal{F}_m(\Omega,f)$}\label{sec3}
\subsection{Subextension}
In this section we give the proof of our main Theorem. We
need the following two Lemmas.
\begin{lem}\label{lem1}
    Let $f\in\mathcal{E}_m(\Omega)$. If $u\in\mathcal{F}_m(\Omega,f)$ is such that 
$\int_{\Omega}H_m(u)<+\infty$, 
then there exists a decreasing sequence $(u_j)_j\subset\mathcal{E}_m^0(\Omega,f)$ that converges pointwise
to $u$ as $j$ tends to $+\infty$, and $\sup_j\int_{\Omega}H_m(u_j)<+\infty.$
Conversely, if $(u_j)_j\subset\mathcal{F}_m(\Omega,f)$ is a decreasing sequence that converges pointwise
to a function $u$, and if $\sup_j\int_{\Omega}H_m(u_j)<+\infty$, then $u \in\mathcal{F}_m(\Omega,f)$
with $\int_{\Omega}H_m(u)<+\infty$.
\end{lem}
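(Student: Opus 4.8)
The plan is to prove the two implications separately, reducing each to a uniform Hessian-mass estimate, which in both cases is where the substance lies.

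\medskip

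\noindent\emph{Direct implication.} Since $u\in\mathcal{F}_m(\Omega,f)$, write $u\geq\varphi+f$ with $\varphi\in\mathcal{F}_m(\Omega)$, and pick $(\varphi_j)\subset\mathcal{E}^0_m(\Omega)$ with $\varphi_j\downarrow\varphi$ and $\sup_j\int_\Omega H_m(\varphi_j)<+\infty$. I would set $u_j:=\max(u,\varphi_j+f)$ and first dispose of the soft points: $u_j\in\mathcal{SH}_m(\Omega)$ as a maximum of two $m$-sh functions; the sandwich $\varphi_j+f\leq u_j\leq f$ with $\varphi_j\in\mathcal{E}^0_m(\Omega)$ shows $u_j\in\mathcal{E}^0_m(\Omega,f)$; and since $\varphi_j+f\downarrow\varphi+f\leq u$ the sequence decreases to $\max(u,\varphi+f)=u$. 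The crux is the bound $\sup_j\int_\Omega H_m(u_j)<+\infty$. Here I would split $\Omega$ along the open set $\{u>\varphi_j+f\}$, on which $u_j=u$ and hence $H_m(u_j)=H_m(u)$, and its complement. On $\{u<\varphi_j+f\}$ one has $u_j=\varphi_j+f$, and since both $u$ and $\varphi_j+f$ belong to $\mathcal{F}_m(\Omega,f)$ with common boundary value $f$, the comparison principle gives $\int_{\{u<\varphi_j+f\}}H_m(\varphi_j+f)\leq\int_{\{u<\varphi_j+f\}}H_m(u)\leq\int_\Omega H_m(u)$. This is exactly the point where $\int_\Omega H_m(u)<+\infty$ enters, and, after absorbing the contact set $\{u=\varphi_j+f\}$ by the standard identities for $H_m(\max(\cdot,\cdot))$, it yields $\int_\Omega H_m(u_j)\leq C\int_\Omega H_m(u)$ uniformly in $j$.

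\medskip

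\noindent\emph{Converse.} Put $u:=\lim_j u_j$; it is $m$-sh as a decreasing limit and satisfies $u\leq u_1\leq f$. Finiteness of the mass I would get from the convergence theorem for decreasing sequences with uniformly bounded masses: from $\sup_j\int_\Omega H_m(u_j)<+\infty$ one obtains $u\in\mathcal{E}_m(\Omega)$, $H_m(u_j)\to H_m(u)$ weakly, and therefore $\int_\Omega H_m(u)\leq\liminf_j\int_\Omega H_m(u_j)<+\infty$ by lower semicontinuity of the mass. It then remains to produce a single $\varphi\in\mathcal{F}_m(\Omega)$ with $\varphi+f\leq u$. Writing $u_j\geq\varphi_j+f$ with $\varphi_j\in\mathcal{F}_m(\Omega)$, I would pass to the regularized minorants $h_j:=\bigl(\sup\{\phi\in\mathcal{SH}_m^{-}(\Omega):\ \phi+f\leq u_j\}\bigr)^{*}$, which satisfy $\varphi_j\leq h_j\leq 0$ and $h_j+f\leq u_j$ (off a negligible set); hence $h_j\in\mathcal{F}_m(\Omega)$ by the sandwich property of $\mathcal{F}_m$, and $h_j$ is decreasing because the defining family shrinks with $u_j$. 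Then $\varphi:=\lim_j h_j$ obeys $\varphi+f\leq h_j+f\leq u_j$ for every $j$, so $\varphi+f\leq u$, which is the desired lower bound.

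\medskip

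\noindent\emph{Main obstacle.} The hard part, and the place where the hypothesis is indispensable, is the uniform mass control of the lower envelopes in the converse: one must establish $\sup_j\int_\Omega H_m(h_j)<+\infty$ so that $\varphi=\lim_j h_j$ is not identically $-\infty$ and genuinely lies in $\mathcal{F}_m(\Omega)$. This forces one to convert $\sup_j\int_\Omega H_m(u_j)<+\infty$ into a bound on the masses of the $h_j$, exploiting that $H_m(h_j)$ is carried by the contact set $\{h_j+f=u_j\}$, where $h_j$ coincides with $u_j-f$. It is precisely this estimate that collapses for arbitrary decreasing sequences, in accordance with the known failure of subextension in $\mathcal{E}_m$ and even $\mathcal{N}_m$; once it is secured, $u\in\mathcal{F}_m(\Omega,f)$ with $\int_\Omega H_m(u)<+\infty$ follows at once.
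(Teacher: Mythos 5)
Your direct implication is essentially sound: $u_j=\max(u,\varphi_j+f)$ is the standard construction (the paper disposes of this half by citing \cite[Propositions 2.12 and 2.13]{G2}), and the uniform bound $\int_\Omega H_m(u_j)\leq\int_\Omega H_m(u)$ is exactly the known monotonicity of total Hessian mass under pointwise domination (\cite[Lemma 2.14]{G2}), which you could cite instead of hand-waving at the contact set $\{u=\varphi_j+f\}$. The converse, however, has a genuine gap, and it sits precisely where you placed your ``main obstacle''. Your envelope $h_j=\bigl(\sup\{\phi\in\mathcal{SH}_m^{-}(\Omega):\ \phi+f\leq u_j\}\bigr)^{*}$ is sandwiched between $\varphi_j\in\mathcal{F}_m(\Omega)$ and $0$, but sandwiching does \emph{not} put $h_j$ in $\mathcal{F}_m(\Omega)$: the class $\mathcal{F}_m(\Omega)$ is not stable under passing to larger negative $m$-sh functions. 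What the sandwich gives is only $h_j\in\mathcal{N}_m(\Omega)$ (from $\widetilde{\varphi_j}=0$ one gets $\widetilde{h_j}=0$), and the difference between $\mathcal{N}_m$ and $\mathcal{F}_m$ is exactly what is at stake in this lemma: the paper's own counterexample section exhibits functions in $\mathcal{N}_m(\Omega)\setminus\mathcal{F}_m(\Omega)$. So the membership $h_j\in\mathcal{F}_m(\Omega)$, and a fortiori $\varphi=\lim_j h_j\in\mathcal{F}_m(\Omega)$, needs the uniform estimate $\sup_j\int_\Omega H_m(h_j)<+\infty$, which you explicitly leave unproved; the obstacle-problem heuristic ($H_m(h_j)$ carried by the contact set and dominated there by $H_m(u_j)$) is itself a nontrivial theorem in these unbounded classes. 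As written, the converse is therefore incomplete. (Your opening claim that bounded masses alone already give $u\in\mathcal{E}_m(\Omega)$ and weak convergence $H_m(u_j)\to H_m(u)$ is also unjustified at that stage; it only becomes available after a minorant $\varphi+f\leq u$ has been produced.)

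The paper circumvents this estimate by choosing minorants whose mass is controlled by construction rather than by an a posteriori bound. After reducing to $(u_j)\subset\mathcal{E}_m^0(\Omega,f)$ (via $\max\{u_j,j\varphi+f\}$) and to $H_m(f)=0$ (replacing $f$ by $\widetilde{f}$, using \cite[Theorem 4.8]{V}), it solves the Dirichlet problem $H_m(\psi_j)=H_m(u_j)$ with $\psi_j\in\mathcal{F}_m(\Omega)$ by \cite[Theorem 1.2]{Ch1}; then $H_m(\psi_j+f)\geq H_m(u_j)$ and the comparison principle \cite[Corollary 3.3]{G2} yield $\psi_j+f\leq u_j$. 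The point is that $\int_\Omega H_m(\psi_j)=\int_\Omega H_m(u_j)$ holds identically, so the hypothesis $\sup_j\int_\Omega H_m(u_j)<+\infty$ transfers to the minorants for free; the regularized suprema $\psi_j'=(\sup_{k\geq j}\psi_k)^{*}$ then form a decreasing sequence in $\mathcal{F}_m(\Omega)$ with bounded masses, their limit $\psi$ lies in $\mathcal{F}_m(\Omega)$ by \cite[Lemma 4.7]{V}, and $u\geq\psi+f$ follows in the limit. If you wish to rescue the envelope approach, the missing mass bound for $h_j$ would in substance have to be proved by this same balayage/Dirichlet-problem argument, so the detour through \cite[Theorem 1.2]{Ch1} is not really avoidable.
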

\begin{proof}
   Suppose that  $u\in\mathcal{F}_m(\Omega,f)$ is such that with $\int_{\Omega}H_m(u)<+\infty$. Then by \cite[Proposition 2.12]{G2}, there exists a decreasing sequence $(u_j)_j\subset\mathcal{E}_m^0(\Omega,f)$ that
 converges pointwise to $u$ on $\Omega$. Since $\int_\Omega(dd^cu)^m\wedge\beta^{n-m}<+\infty$, if follows from \cite[Proposition 2.13]{G2} that $\sup_j\int_\Omega H_m(u_j)<+\infty.$
\par Conversely, assume that $(u_j)_j\subset\mathcal{F}_m(\Omega,f)$ be a decreasing sequence that converges pointwise
to a function $u$ as $j$ tends to $+\infty$ with $\sup_j\int_{\Omega}H_m(u_j)<+\infty$. 
 Assume first that $(u_j)_j\subset\mathcal{E}_m^0(\Omega,f)$. Since $u_j\leq f$, then by \cite[Lemma 2.14]{G2} we have $\int_\Omega H_m(f)<+\infty$ and by \cite[Theorem 4.8]{V} we get $f\in \mathcal{F}_m(\Omega,\tilde{f})$. Hence, we can without loss of generality assume that $H_m(f)=0.$ Now, since $H_m(u_j)$ vanishes on pluripolar sets and $\int_\Omega H_m(u_j)<+\infty$, then by \cite[Theorem 1.2]{Ch1} there exists $\psi_j\in \mathcal{F}_m(\Omega)$ such that $H_m(\psi_j)=H_m(u_j)$. But $H_m(\psi_j+f)\geq H_m(u_j)$, then by \cite[Corollary 3.3]{G2} it follows that $u_j\geq\psi_j+f.$ Put $\psi_j^{'}=(\sup_{k\geq j}\psi_k)^{*}$. Then $(\psi'_j)_j$
is a decreasing sequence  which belongs to $\mathcal{F}_m(\Omega)$. It follows from the comparison principle that 
$$\sup_j\int_{\Omega}H_m(\psi^{'}_j)\leq\sup_j\int_{\Omega}H_m(\psi_j)=\sup_j\int_{\Omega}H_m(u_j)<+\infty.$$  Let $\psi=\lim_{j\to +\infty}\psi_j^\prime$. By \cite[Lemma 4.7]{V} we have $\psi\in\mathcal{F}_m(\Omega)$. Let $j\in\mathbb{N}$. Since $u_j\geq u_k\geq \psi_k+f$ for all $k\geq j$, then $u_j\geq \psi^\prime_j+f$. Letting $j$ to $+\infty$ to get $u\geq \psi+f$. Hence $u\in\mathcal{F}_m(\Omega,f).$ Finally, by \cite[Proposition 2.13]{G2} it follows that 
$$\int_\Omega H_m(u)=\lim_j\int_{\Omega}H_m(u_j)<+\infty.$$
Now if $(u_j)_j\subset\mathcal{F}_m(\Omega,f)$, take  $v_j=\max\{u_j,j\varphi +f\}$ where  $\varphi\in \mathcal{E}_{m}^0(\Omega)$ and $\varphi\neq 0$. Then $v_j\in \mathcal{E}_{m}^0(\Omega,f)$, $v_j\searrow u$ and by \cite[Lemma 2.14]{G2} we have $\sup_j\int_{\Omega}H_m(v_j)\leq \sup_j\int_{\Omega}H_m(u_j)<+\infty$. It follows then from above that
$u \in\mathcal{F}_m(\Omega,f)$
with $\int_{\Omega}H_m(u)<+\infty$.
\end{proof}
\begin{lem}\label{lem2}
    Let $\Omega\subset \tilde{\Omega}$  be bounded $m$-hyperconvex domains in $\mathbb{C}^n$ and let $f\in \mathcal{E}_m(\Omega)$ and $g\in\mathcal{MSH}_m^{-}(\tilde{\Omega})$ be such that $g\leq f$ on $\Omega$. Then for every  $u\in \mathcal{E}_m^0(\Omega,f)$, there exists
a function $\tilde{u}\in \mathcal{E}_m^0(\tilde{\Omega},g)$ such that $\tilde{u}\leq u$, $H_m(\tilde{u})\leq H_m(u)$ on $\Omega$ and $H_m(\tilde{u})=0$ on $\tilde{\Omega}\setminus\Omega.$
\end{lem}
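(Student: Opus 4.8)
The plan is to obtain $\tilde u$ as the largest $m$-subharmonic minorant of the two obstacles $u$ (on $\Omega$) and $g$ (on $\tilde\Omega$). Concretely I would set
$$\tilde u := \Big(\sup\big\{\,v\in\mathcal{SH}_m(\tilde\Omega)\ :\ v\le g \text{ on } \tilde\Omega,\ v\le u \text{ on }\Omega\,\big\}\Big)^{*}.$$
First I would record the elementary properties. The defining family is stable under finite maxima and is locally bounded above by $g\le 0$, so by item $6)$ of the Proposition the regularized supremum $\tilde u$ is $m$-subharmonic on $\tilde\Omega$; moreover $\tilde u\le g$ since $g$ is upper semicontinuous, and because every competitor is dominated on $\Omega$ by the upper semicontinuous function $u$, passing to the upper regularization preserves $\tilde u\le u$ on $\Omega$.

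The key preliminary point is that the family is not merely nonempty but forces the correct boundary behaviour, which is what puts $\tilde u$ in $\mathcal{E}_m^0(\tilde\Omega,g)$. Write $u\ge\varphi+f$ with $\varphi\in\mathcal{E}_m^0(\Omega)$ and fix $M>0$ with $\varphi\ge -M$. Applying the subextension theorem for the class $\mathcal{F}_m$ (as in \cite{MV}) to $\varphi\in\mathcal{E}_m^0(\Omega)\subset\mathcal{F}_m(\Omega)$ yields $\psi\in\mathcal{F}_m(\tilde\Omega)$ with $\psi\le\varphi$ on $\Omega$; truncating, $\psi':=\max(\psi,-M)\in\mathcal{E}_m^0(\tilde\Omega)$ still satisfies $\psi'\le\varphi$ on $\Omega$. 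Then $v_0:=\psi'+g$ belongs to the defining family: one has $v_0\le g$ everywhere, while on $\Omega$ the chain $\psi'+g\le\varphi+g\le\varphi+f\le u$ holds using $g\le f$. Hence $\tilde u\ge\psi'+g$, and combined with $\tilde u\le g$ this gives $g\ge\tilde u\ge\psi'+g$, i.e. $\tilde u\in\mathcal{E}_m^0(\tilde\Omega,g)$.

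It remains to analyse $H_m(\tilde u)$, and here the engine is a balayage argument. For a ball $B\Subset\tilde\Omega$ I would solve the homogeneous Dirichlet problem to obtain the maximal $h\in\mathcal{SH}_m(B)$ with boundary values $\tilde u|_{\partial B}$, so that $h\ge\tilde u$ on $B$; comparison with the maximal function $g$ (whose boundary values on $\partial B$ dominate those of $h$) yields $h\le g$ on $B$. When $B\Subset\tilde\Omega\setminus\bar\Omega$, gluing $h$ into $\tilde u$ across $\partial B$ produces a new competitor (it still satisfies $\le g$ globally and $\le u$ on $\Omega$, the latter vacuously since $B\cap\Omega=\varnothing$), so $h\le\tilde u$, whence $h=\tilde u$ and $H_m(\tilde u)=H_m(h)=0$ on $B$. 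Letting $B$ vary gives $H_m(\tilde u)=0$ on $\tilde\Omega\setminus\bar\Omega$.

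Finally, on $\Omega$ the obstacle $u$ becomes active, and I would invoke the domination property of such envelopes: the Hessian measure of $\tilde u$ is carried by its contact set with the obstacles, and on each piece is dominated by that obstacle's Hessian. Since $g$ is maximal, $H_m(g)=0$, so no mass can sit on $\{\tilde u=g\}$; using the comparison principle for the complex Hessian operator available in \cite{Ch1,V,G2} one obtains $H_m(\tilde u)\le\mathds{1}_{\{\tilde u=u\}}H_m(u)\le H_m(u)$ on $\Omega$, and moreover the whole mass of $H_m(\tilde u)$ localizes to $\{\tilde u=u\}\subset\Omega$. This last fact simultaneously yields $H_m(\tilde u)=0$ on all of $\tilde\Omega\setminus\Omega$, including $\partial\Omega$, not just on the open part $\tilde\Omega\setminus\bar\Omega$ handled by balayage. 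The main obstacle is precisely this measure-theoretic contact-set domination on $\Omega$ together with the control of the behaviour along $\partial\Omega$: one must rule out hidden Hessian mass of the envelope concentrated on the boundary, which I would do by the maximality established off $\bar\Omega$ combined with the contact-set localization confining $H_m(\tilde u)$ to $\{\tilde u=u\}$.
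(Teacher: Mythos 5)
Your envelope is exactly the one the paper uses, and the easy parts of your plan (the elementary properties of the envelope, and the balayage giving $H_m(\tilde u)=0$ on $\tilde\Omega\setminus\bar\Omega$) are sound, but two of your steps have genuine gaps, and they sit precisely where the real work of the lemma lies. The first is the membership $\tilde u\in\mathcal{E}_m^0(\tilde\Omega,g)$. Your minorant is $\psi'+g$ with $\psi':=\max(\psi,-M)$, where $\psi\in\mathcal{F}_m(\tilde\Omega)$ is the subextension of $\varphi$ from \cite{MV}. The inequalities $\psi'\le\varphi$ on $\Omega$ and $\psi'+g\le u$ are fine, but $\psi'$ need \emph{not} belong to $\mathcal{E}_m^0(\tilde\Omega)$: a bounded function in $\mathcal{F}_m(\tilde\Omega)$ can fail to have boundary limit $0$ at some point of $\partial\tilde\Omega$. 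For instance, a sum $\sum_j 2^{-j}g^m_{\tilde\Omega}(\cdot,a_j)$ with poles $a_j$ accumulating at $\xi_0\in\partial\tilde\Omega$ lies in $\mathcal{F}_m(\tilde\Omega)$ (its partial sums have uniformly bounded mass, cf. the subadditivity estimate \cite{HZ} used in the paper's own counterexample section), yet its truncation at level $-M$ equals $-M$ at every pole and so has no limit $0$ at $\xi_0$. Thus your argument only places $\tilde u$ above an $\mathcal{F}_m$-type minorant, not an $\mathcal{E}_m^0$ one. The paper avoids this by first replacing $g$ by $g-\varepsilon$ (so that $g+\varepsilon\le f$, undoing the shift at the end via \cite[Theorem 3.8]{HP}), setting $D=\{\varphi<-\varepsilon\}\Subset\Omega$, and taking the minorant $\max\{\tilde\varphi,A\psi\}$ with $\psi\in\mathcal{E}_m^0(\tilde\Omega)$ and $A\psi\le\varphi$ only on $\bar D$; on $\Omega\setminus\bar D$ one has $u\ge\varphi+f\ge f-\varepsilon\ge g$, so there the obstacle $u$ is inactive and no comparison with $\varphi$ is needed. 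Being squeezed between $A\psi$ and $0$, this max genuinely lies in $\mathcal{E}_m^0(\tilde\Omega)$.

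The second and more serious gap is your treatment of $H_m(\tilde u)$ on $\Omega$ and on $\partial\Omega\cap\tilde\Omega$, which rests entirely on an unproved ``contact-set domination'' principle ($H_m(\tilde u)$ carried by $\{\tilde u=u\}\cup\{\tilde u=g\}$ and dominated there by the obstacles' Hessians). No such theorem in the required generality is in the references you point to: here the obstacle $u$ lives only on the subdomain $\Omega$, neither obstacle is continuous, and $u$ may be unbounded since $f\in\mathcal{E}_m(\Omega)$ is arbitrary. Moreover, your plan for excluding mass on $\partial\Omega\cap\tilde\Omega$ (``maximality off $\bar\Omega$ combined with the contact-set localization'') is circular: the localization is exactly the unproven claim, and balayage on balls says nothing about $\partial\Omega$ itself. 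This is the crux that the paper's two steps are designed to handle. Step 1 uses the $\varepsilon$-shift so that $g\le u$ on $\Omega\setminus\bar D$, which makes $\tilde u$ maximal on the \emph{open} set $\tilde\Omega\setminus\bar D$ containing a full neighborhood of $\partial\Omega\cap\tilde\Omega$; that is how boundary mass is excluded. Step 2 then approximates $u$ by continuous $u_j\in\mathcal{E}_m^0(\Omega)\cap\mathcal{C}(\bar\Omega)$, proves $H_m(\tilde u_j)=0$ on the open sets $\{\tilde u_j<u_j\}\cap\Omega$ by balayage, passes to the limit using \cite[Theorem 3.8]{HP} and \cite[Lemma 4.2]{KH} to get $H_m(\tilde u)=0$ on $\{\tilde u<u\}\cap\Omega$, and finally, for compact $K\subset\{\tilde u=u\}\cap\Omega$, bounds $\int_K H_m(\tilde u)\le\int_K H_m(u)$ via the convergence theorem \cite[Theorem 3.6]{HP} applied to $\max\{\tilde u+\frac1j,u\}$. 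Without arguments of this kind, your proposal asserts the lemma's conclusion rather than proving it.
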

\begin{proof}
    By replacing $g$ by $g-\varepsilon$, $\varepsilon >0$, it follows from \cite[Theorem 3.8]{HP} that we can assume $g+\varepsilon\leq f$ on $\Omega$. Set $$\tilde{u}:=\sup\{\varphi\in\mathcal{SH}_m(\tilde{\Omega}):\; \varphi\leq g\;\mbox{on}\; \tilde{\Omega}\;\mbox{and}\;\varphi\leq u\;\mbox{on}\; \Omega\}.
   $$ 
    We split the proof into two steps.\\
    {\bf Step 1.} We claim that $\tilde{u}\in\mathcal{E}_m^0(\tilde{\Omega},g)$ and $H_m(\tilde{u})=0$ on $\tilde{\Omega}\setminus \Omega$. Indeed, since $u\in \mathcal{E}_m^0(\Omega,f)$, then $\varphi+f\leq u\leq f$ with $\varphi\in\mathcal{E}_m^0(\Omega)$. Put $D=\{\varphi<-\varepsilon\}$. Then $\bar{D}\subset\subset \Omega$ and $u\geq \varphi +f\geq g$ on $\Omega\setminus \bar{D}$. By \cite[Theorem 1]{MV} there exists $\tilde{\varphi}\in \mathcal{F}_m(\tilde{\Omega})$ such that $\tilde{\varphi}\leq \varphi$ on $\Omega$. Let $\psi\in\mathcal{E}_m^0(\tilde{\Omega})$ and $A>>1$ such that $A\psi\leq \varphi$ on $\bar{D}$. Take $\tilde{\psi}=\max\{\tilde{\varphi},A\psi\}$. Then $\tilde{\psi}\in \mathcal{E}_m^0(\tilde{\Omega})$, $\tilde{\psi}+g\leq \varphi +g\leq u$ on $\bar{D}$ and $\tilde{\psi}+g\leq g\leq u$ on $\Omega\setminus \bar{D}$. It follows from the definition of $\tilde{u}$ that $\tilde{\psi}+g\leq \tilde{u}$ on $\tilde{\Omega}$. Hence $\tilde{u}\in\mathcal{E}_m^0(\tilde{\Omega},g)$. \\ On the other hand let  $K\subset \subset\tilde{\Omega}\setminus \bar{D}$ and $w\in\mathcal{SH}_m^-(\tilde{\Omega}\setminus \bar{D})$ such that $w\leq \tilde{u}$ outside $K$. Set
       $$v=\left\{
     \begin{array}{ll}
       \max\{w,\tilde{u}\} & \mbox{on} \ \  \tilde{\Omega}\setminus \bar{D}, \\
        \tilde{u} & \mbox{on}\ \  \bar{D}.
     \end{array}
   \right. $$
      Then $v\in\mathcal{SH}_m^-(\tilde{\Omega})$ and $v\leq \tilde{u}\leq g$ in $\tilde{\Omega}\setminus {K}$. Since $g$ is maximal then $v\leq g$ in $\tilde{\Omega}$. However, $v= \tilde{u}\leq u$ on $\bar{D}$ and $v\leq g\leq u$ on $\Omega\setminus \bar{D}$, then $v\leq u$ on $\Omega$. It follows by definition of $\tilde{u}$ that $v\leq\tilde{u}$ on $\tilde{\Omega}$. Therefore, $ H_m(\tilde{u})=0$ on $\tilde{\Omega}\setminus\Omega.$  \\
      {\bf Step 2.} Now we prove that $H_m(\tilde{u})\leq H_m(u)$ on $\Omega$. We claim that $H_m(\tilde{u})=0$ on $\{\tilde{u}<u\}\cap \Omega$. Indeed, let $(u_j)\subset \mathcal{E}_m^0(\Omega)\cap \mathcal{C}(\bar{\Omega})$ such that $u_j\downarrow u$. Put 
      $$\tilde{u}_j:=\sup\{\varphi\in\mathcal{SH}_m(\tilde{\Omega}):\; \varphi\leq g\;\mbox{on}\; \tilde{\Omega}\;\mbox{and}\;\varphi\leq u_j\;\mbox{on}\; \Omega\}.
   $$ 
   Then $\tilde{u}_j\downarrow \tilde{u}$ and since $\tilde{u}\in\mathcal{E}_m^0(\tilde{\Omega},g)$ we get $\tilde{u}_j\in\mathcal{E}_m^0(\tilde{\Omega},g)$. As $u_j$ is continuous, the set $\{\tilde{u}_j<u_j\}\cap \Omega$ is open. Let $z\in \{\tilde{u}_j<u_j\}\cap \Omega$, then there exists $r>0$ small enough such that $B(z,r)\subset\subset \{\tilde{u}_j<u_j\}\cap \Omega$ and $\displaystyle\sup_{\zeta\in B(z,r)}\tilde{u}_j(\zeta)<\inf_{\zeta\in B(z,r)}{u}_j(\zeta)$. Let $K\subset B(z,r)$ be a compact and $w\in\mathcal{SH}_m^-(B(z,r))$ such that $w\leq \tilde{u}_j$ on $B(z,r)\setminus K$. Set 
   $$v=\left\{
     \begin{array}{ll}
       \tilde{u}_j & \mbox{on} \ \  \tilde{\Omega}\setminus {B}(z,r), \\
        \max\{w,\tilde{u}_j\} & \mbox{on}\ \  B(z,r).
     \end{array}
   \right.$$
   Since $\sup_{\zeta\in \partial B(z,r)}v(\zeta)=\sup_{\zeta\in \partial B(z,r)}\tilde{u}_j(\zeta)$, then $v\leq u_j$ on $B(z,r)$ and by the maximality of $g$ it follows from definition of $\tilde{u}_j$ that $v\leq \tilde{u}_j$ on $\tilde{\Omega}$. Therefore, $w\leq \tilde{u}_j$ on $B(z,r)$. Hence $H_m(\tilde{u}_j)=0$ on $B(z,r)$ and since $z$ is taken arbitrary we get $H_m(\tilde{u}_j)=0$ on $\{\tilde{u}_j<u_j\}\cap \Omega.$  
  Moreover, for all $j\geq k$ we have $\{\tilde{u}_k<u\}\cap \Omega\subset \{\tilde{u}_j<u_j\}\cap \Omega$ since $u\leq u_j$. Hence  $H_m(\tilde{u}_j)=0$ on $\{\tilde{u}_k<u\}\cap \Omega$ for all $j\geq k$. Now since
  $\{\tilde{u}_k<u\}\cap \Omega=\bigcup_{a\in\mathbb{Q}^-}\{\tilde{u}_k<a<u\}\cap \Omega,$ then for all $j\geq k$ we have $H_m(\tilde{u}_j)=0$ on $\{\tilde{u}_k<a<u\}\cap \Omega.$ Furthermore, $\max\{u-a,0\}H_m(\tilde{u}_j)=0$ on $\{\tilde{u}_k<a\}\cap \Omega$,
and by \cite[Theorem 3.8]{HP} we get 
$\max\{u-a,0\}H_m(\tilde{u})=0$ on $\{\tilde{u}_k<a\}\cap \Omega$. Therefore, it follows from \cite[Lemma 4.2]{KH}, that $H_m(\tilde{u})=0$ on $\{\tilde{u}_k<a<u\}\cap \Omega$  and hence on  $\{\tilde{u}_k<u\}\cap \Omega$. Since  $\{\tilde{u}<u\}\cap \Omega=\bigcup_{k\in\mathbb{N}}\{\tilde{u}_k<u\}\cap \Omega,$ then  $H_m(\tilde{u})=0$ on $\{\tilde{u}<u\}\cap \Omega.$ \\ Finally, it remains to prove that $H_m(\tilde{u})\leq H_m(u)$ on $\{\tilde{u}=u\}\cap \Omega.$ Let $K\subset\{\tilde{u}=u\}\cap \Omega $ be a compact. For all $j\geq 1$ we have $K\subset\subset\{\tilde{u}+\frac{1}{j}>u\}\cap \Omega$. It follows from \cite[Theorem 3.6]{HP} that
$$\begin{array}{ll} \displaystyle\int_KH_m(\tilde{u})&=\displaystyle\lim_{j\to+\infty}\int_KH_m\Big(\max\{\tilde{u}+\frac{1}{j},u\}\Big)\\
&\displaystyle\leq \int_KH_m(\max\{\tilde{u},u\})\\
&\displaystyle =\int_KH_m(u).
\end{array}
$$  Therefore, $H_m(\tilde{u})\leq H_m(u)$ on $\Omega$.
\end{proof}
Now we give the proof of our main theorem.
\begin{thm}\label{extension}
  Let $\Omega$ and $\tilde{\Omega}$ be a $m$-hyperconvex domains such that $\Omega\subset\tilde{\Omega}$. Given $f\in\mathcal{E}_m(\Omega)$ and $g\in\mathcal{MSH}_m(\tilde{\Omega})\cap\mathcal{E}_m (\tilde{\Omega})$ that satisfy $g \leq f$ on $\Omega$. If $u\in \mathcal{F}_m(\Omega,f)$ and $\int_{\Omega}H_m(u) < +\infty$,
then there exists a function $\tilde{u}\in\mathcal{F}_m(\tilde{\Omega},g)$ such that $\tilde{u}\leq u$  on $\Omega$, and $H_m(\tilde{u}) =
\mathds{1}_{\Omega}H_m(u).$
\end{thm}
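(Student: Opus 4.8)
The plan is to reduce to bounded functions, subextend them with Lemma~\ref{lem2}, pass to a decreasing limit to obtain a candidate $\tilde u\in\mathcal{F}_m(\tilde\Omega,g)$, and finally pin down its Hessian measure by proving two opposite inequalities between total masses.

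First I would invoke Lemma~\ref{lem1} to produce a decreasing sequence $(u_j)\subset\mathcal{E}_m^0(\Omega,f)$ with $u_j\downarrow u$ and $\sup_j\int_\Omega H_m(u_j)<+\infty$. Since $\mathcal{E}_m(\tilde\Omega)\subset\mathcal{SH}_m^-(\tilde\Omega)$ we have $g\le 0$, so $g\in\mathcal{MSH}_m^-(\tilde\Omega)$ and Lemma~\ref{lem2} applies to each $u_j$, giving $\tilde u_j\in\mathcal{E}_m^0(\tilde\Omega,g)$ with $\tilde u_j\le u_j$, $H_m(\tilde u_j)\le H_m(u_j)$ on $\Omega$ and $H_m(\tilde u_j)=0$ on $\tilde\Omega\setminus\Omega$. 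Taking $\tilde u_j$ to be exactly the envelope used in the proof of Lemma~\ref{lem2}, the monotonicity $u_{j+1}\le u_j$ shrinks the admissible family and forces $\tilde u_{j+1}\le\tilde u_j$, so $(\tilde u_j)$ decreases; moreover $\int_{\tilde\Omega}H_m(\tilde u_j)=\int_\Omega H_m(\tilde u_j)\le\int_\Omega H_m(u_j)$ is uniformly bounded. Setting $\tilde u:=\lim_j\tilde u_j$, the converse part of Lemma~\ref{lem1}, applied on $\tilde\Omega$ with boundary datum $g\in\mathcal{E}_m(\tilde\Omega)$, yields $\tilde u\in\mathcal{F}_m(\tilde\Omega,g)$ with $\int_{\tilde\Omega}H_m(\tilde u)<+\infty$, and letting $j\to+\infty$ in $\tilde u_j\le u_j$ gives $\tilde u\le u$ on $\Omega$.

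Next I would establish $H_m(\tilde u)\le\mathds{1}_\Omega H_m(u)$ as measures on all of $\tilde\Omega$. For fixed nonnegative $\chi\in C_c(\tilde\Omega)$, Lemma~\ref{lem2} gives $\int_{\tilde\Omega}\chi\,H_m(\tilde u_j)=\int_\Omega\chi\,H_m(\tilde u_j)\le\int_\Omega\chi\,H_m(u_j)$. As $(\tilde u_j)$ and $(u_j)$ are decreasing with convergent total masses, the Hessian measures converge in the required sense: the left-hand side tends to $\int_{\tilde\Omega}\chi\,H_m(\tilde u)$, while the convergence $\int_\Omega H_m(u_j)\to\int_\Omega H_m(u)$ from Lemma~\ref{lem1} rules out any escape of mass to $\partial\Omega$ and forces the right-hand side to tend to $\int_\Omega\chi\,H_m(u)$. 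Hence $\int_{\tilde\Omega}\chi\,H_m(\tilde u)\le\int_\Omega\chi\,H_m(u)$ for all such $\chi$, i.e. $H_m(\tilde u)\le\mathds{1}_\Omega H_m(u)$; in particular $H_m(\tilde u)$ charges neither $\tilde\Omega\setminus\bar\Omega$ nor $\partial\Omega$, and $\int_{\tilde\Omega}H_m(\tilde u)\le\int_\Omega H_m(u)$.

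The hard part will be the reverse mass inequality $\int_{\tilde\Omega}H_m(\tilde u)\ge\int_\Omega H_m(u)$, which is precisely what upgrades the one-sided bound of Lemma~\ref{lem2} to an equality and genuinely uses $u\in\mathcal{F}_m$ together with finiteness. Here I would argue by the comparison principle for the Cegrell classes with boundary values: since $\tilde u\le u$ on $\Omega$ and the boundary datum $g$ of $\tilde u$ satisfies $g\le f$, the more singular function $\tilde u$ must carry at least as much Hessian mass on $\Omega$ as $u$, so that $\int_\Omega H_m(\tilde u)\ge\int_\Omega H_m(u)$ and \emph{a fortiori} $\int_{\tilde\Omega}H_m(\tilde u)\ge\int_\Omega H_m(u)$. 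Combined with the previous paragraph this gives $\int_{\tilde\Omega}H_m(\tilde u)=\int_\Omega H_m(u)$, and together with $H_m(\tilde u)\le\mathds{1}_\Omega H_m(u)$ it forces the equality $H_m(\tilde u)=\mathds{1}_\Omega H_m(u)$ on $\tilde\Omega$. The delicate point in this step is the exact class into which $\tilde u$ restricts over the possibly non-relatively-compact subdomain $\Omega$ and the correct form of the comparison principle there; I would circumvent it by carrying out the mass comparison at the level of the bounded approximants $\tilde u_j,u_j$, where the comparison principle is cleanest, and then passing to the limit using once more the convergence of total masses supplied by Lemma~\ref{lem1}.
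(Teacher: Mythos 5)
Your first two paragraphs reproduce Step 1 of the paper's proof and are essentially correct: the decreasing envelopes $\tilde u_j$ supplied by Lemma~\ref{lem2}, the converse direction of Lemma~\ref{lem1} applied on $\tilde\Omega$ with datum $g$, and the limiting argument do give a function $w\in\mathcal{F}_m(\tilde\Omega,g)$ with $w\le u$ on $\Omega$ and $H_m(w)\le\mathds{1}_\Omega H_m(u)$. The gap is your third paragraph. The asserted principle --- that $w\le u$ on $\Omega$ together with $g\le f$ forces $\int_\Omega H_m(w)\ge\int_\Omega H_m(u)$ --- is false. Mass monotonicity under a pointwise inequality (\cite[Lemma 2.14]{G2}, which is what the paper uses for such comparisons) requires both functions to lie in a class with the \emph{same} boundary datum on the \emph{same} domain, e.g.\ both in $\mathcal{F}_m(\Omega,f)$; but the restriction $w|_\Omega$ need not dominate $\varphi+f$ for any $\varphi\in\mathcal{F}_m(\Omega)$: its boundary behaviour on $\partial\Omega$ drops strictly below $f$, and that is exactly where Hessian mass escapes. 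Concretely, take $n=m=1$, $\Omega=D(0,1)\subset\tilde\Omega=D(0,2)$, $f=g=0$, and $u=\max\{\log|z|,-1\}\in\mathcal{E}_1^0(\Omega)$, so that one may take $u_j\equiv u$. Then your $\tilde u$ is the maximal subextension, which one computes to be $w=\max\bigl\{\frac{\log|z|+1}{1+\log 2}-1,\,-1\bigr\}$, and its total Hessian mass is $\frac{1}{1+\log 2}\int_\Omega H_1(u)<\int_\Omega H_1(u)$. So the reverse mass inequality fails already at the level of the bounded approximants (hence your proposed ``circumvention'' fails as well), and no repair is possible: the envelope simply does not satisfy $H_m(\tilde u)=\mathds{1}_\Omega H_m(u)$; the function fulfilling the theorem in this example is $u-\log 2$, which lies strictly below $w$.

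This is precisely why the paper has a nontrivial Step 2, which cannot be skipped: one must abandon the envelope and construct a smaller competitor carrying the prescribed measure. The paper first shows, combining \cite[Proposition 5.2]{HP} with $H_m(w)\le\mathds{1}_\Omega H_m(u)$, that the $m$-polar parts agree: $\mathds{1}_{\{w=-\infty\}}H_m(w)=\mathds{1}_{\{u=-\infty\}\cap\Omega}H_m(u)$. It then splits the target measure $\mathds{1}_\Omega H_m(u)$ into its non-polar part, realized as $H_m(\psi)$ for some $\psi\in\mathcal{F}_m(\tilde\Omega)$ by \cite[Theorem 1.2]{Ch1}, and its polar part, realized as $H_m(h)$ for some $h\in\mathcal{N}_m(\tilde\Omega,g)$ with $w\le h$ by \cite[Main Theorem]{G2}; the function $\tilde u$ is then defined as the envelope of functions below $h$ whose Hessian measure dominates $H_m(\psi)$, so that $H_m(\tilde u)=H_m(\psi)+H_m(h)=\mathds{1}_\Omega H_m(u)$ by \cite[Proposition 5.3]{G2}, and the inequality $\tilde u\le w\le u$ is obtained by a uniqueness argument (\cite[Theorem 2.10]{G1}) identifying $w$ with the analogous envelope built from its own measure. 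Note also how Step 1's function $w$ is still essential there: it is what carries the polar part of the mass, which could not be prescribed by \cite[Theorem 1.2]{Ch1} alone. Without a construction of this type, producing a subextension strictly below the maximal one, the equality of measures cannot be reached.
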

\begin{proof}
We split the proof into two steps.\\
{\bf Step 1.} We claim that there exists $w\in \mathcal{F}_m(\tilde{\Omega},g)$ such that $w\leq u$ on $\Omega$ and $H_m(w)\leq \mathds{1}_\Omega H_m(u)$. Let  $(u_j)\subset\mathcal{E}_m^0(\Omega,f)$ be a sequence decreasing to $u$. By Lemma \ref{lem2}, for all $j$ there exists
a function $\tilde{u}_j\in \mathcal{E}_m^0(\tilde{\Omega},g)$ such that $\tilde{u}_j\leq u_j$, $H_m(\tilde{u}_j)\leq H_m(u_j)$ on $\Omega$ and $H_m(\tilde{u}_j)=0$ on $\tilde{\Omega}\setminus\Omega.$ Moreover, it follows from the construction of each $\tilde{u}_j$ that the sequence $(\tilde{u}_j)_j$ is decreasing. Set $w=\lim_{j\to+\infty}\tilde{u}_j$. By above and \cite[Lemma 2.14]{G2} we have  $w\leq u$ on $\Omega$ and $$\sup_j\int_{\tilde{\Omega}}H_m(\tilde{u}_j)\leq \sup_j\int_{\Omega}H_m(u_j)\leq\int_{\Omega}H_m(u)<+\infty.$$ Therefore, it follows from Lemma \ref{lem1} that $w\in \mathcal{F}_m(\tilde{\Omega},g)$ and $\int_{\tilde{\Omega}}H_m(w)<+\infty.$ Now, since $\mathds{1}_\Omega H_m(u)\leq \lim_j\mathds{1}_\Omega H_m(u_j)$ and $\lim_j\int_{\Omega}H_m(u_j)=\int_{\Omega}H_m(u)$ by \cite[Lemma 2.13]{G2}, then 
$\mathds{1}_\Omega H_m(u)=\lim_j\mathds{1}_\Omega H_m(u_j).$ Therefore we get 
$ H_m(w)=\lim_jH_m(\tilde{u}_j)\leq \lim_j\mathds{1}_\Omega H_m(u_j)=\mathds{1}_\Omega H_m(u)$ on $\tilde{\Omega}.$\\
{\bf Step 2.} It follows from  \cite[Proposition 5.2]{HP} that $\mathds{1}_{\{u=-\infty\}}H_m(u)\leq \mathds{1}_{\{w=-\infty\}\cap \Omega}H_m(w)$. But since $H_m(w)\leq \mathds{1}_\Omega H_m(u)$, then $\mathds{1}_{\{w=-\infty\}}H_m(w)\leq \mathds{1}_{\{w=-\infty\}\cap \Omega}H_m(u)\leq \mathds{1}_{\{u=-\infty\}\cap\Omega}H_m(u)$. Therefore $$\mathds{1}_{\{w=-\infty\}}H_m(w)=\mathds{1}_{\{u=-\infty\}\cap\Omega}H_m(u).$$ Let $\mu=\mathds{1}_{\Omega\cap\{u>-\infty\}}H_m(u)$. Since  $\mu(\tilde{\Omega})\leq \int_\Omega H_m(u)<+\infty$, then by \cite[Theorem 1.2]{Ch1}, there exists $\psi\in\mathcal{F}_m(\tilde{\Omega})$ such that $\mu=H_m(\psi).$ On the other hand, $\mathds{1}_{\{w=-\infty\}}H_m(w)\leq H_m(w)$, then by \cite[Main Theorem]{G2}, there exists $h\in\mathcal{N}_m(\tilde{\Omega},g)$ such that $H_m(h)=\mathds{1}_{\{w=-\infty\}}H_m(w)$ and $w\leq h$ on $\tilde{\Omega}$. Hence $h\in \mathcal{F}_m(\tilde{\Omega},g)$. Put
$$\tilde{u}:=\sup\{\varphi\in\mathcal{E}_m(\tilde{\Omega}):\; \varphi\leq h\; \hbox{and}\; H_m(\varphi)\geq H_m(\psi)\}$$
We have $\psi + h\leq \tilde{u}\leq h$ and since, for all $\xi\in\mathcal{E}_m(\tilde{\Omega})\cap\mathcal{C}(\overline{\tilde{\Omega}})$  we have
$$\begin{array}{ll} \displaystyle\int_{\tilde{\Omega}}(-\xi)(H_m(\psi)+H_m(h))&=\displaystyle\int_{\tilde{\Omega}}(-\xi)\big(\mathds{1}_{\Omega\cap\{u>-\infty\}}H_m(u)+\mathds{1}_{\{w=-\infty\}}H_m(w)\big)\\
&=\displaystyle\int_{\tilde{\Omega}}(-\xi)\big(\mathds{1}_{\Omega\cap\{u>-\infty\}}H_m(u)+\mathds{1}_{\{u=-\infty\}\cap \Omega}H_m(u)\big)\\
&\displaystyle =\int_{{\Omega}}(-\xi)H_m(u)\\
&\displaystyle \leq\sup_{\overline{\tilde{\Omega}}}(-\xi)\int_{{\Omega}}H_m(u)\\
&\displaystyle <+\infty,
\end{array}
$$
then by \cite[Proposition 5.3]{G2} we have $\tilde{u}\in\mathcal{N}_m(\tilde{\Omega},g)$  and $$H_m(\tilde{u})=H_m(\psi)+H_m(h)=\mathds{1}_\Omega H_m(u).$$ 
Since $\psi + h\leq \tilde{u}$ then $\tilde{u}\in\mathcal{F}_m(\tilde{\Omega},g)$. It remains to show that $\tilde{u}\leq u$ on $\Omega.$ Similarly, there exists $\psi_0\in\mathcal{F}_m(\tilde{\Omega})$ such that $H_m(\psi_0)=\mathds{1}_{\{w>-\infty\}}H_m(w)$ and the function
$$w_0:=\sup\{\varphi\in\mathcal{E}_m(\tilde{\Omega}):\; \varphi\leq h\; \hbox{and}\; H_m(\varphi)\geq H_m(\psi_0)\}$$
satisfies $H_m(w_0)=H_m(\psi_0)+H_m(h)=H_m(w).$ Since $w\leq h$, then $w\leq w_0$. Moreover $\int_{\tilde{\Omega}}H_m(w)<+\infty$, it follows from Theorem \ref{31} that $w=w_0$. Finally since $H_m(\tilde{u})\geq H_m(w)\geq H_m(\psi_0)$ and $\tilde{u}\leq h$, then $\tilde{u}\leq w_0=w$, which yields that $\tilde{u}\leq u$ on $\Omega$. The proof is then completed.
\end{proof}

\subsection{A Counterexample}
    We give  an example of an $m$-subharmonic function in the class $\mathcal{N}_m(\Omega)\setminus \mathcal{F}_m(\Omega)$ that can not be extended. We follow the same construction given in \cite{W} and \cite{H} for plurisubharmonic functions. Let $\Omega$ be a bounded $m$-hyperconvex domain and $g_\Omega(z,a)$ be the $m$-Green function for $\Omega$ with a pole at $a\in\Omega$ defined for $1\leq m<n$ by 
    $$
    g^m_\Omega(z,a)=\sup\{u\in\mathcal{SH}_m(\Omega,[-\infty,0));\; u(z)+||z-a||^{2-\frac{2n}{m}}\leq O(1)\;\; \mbox{as}\; z\to a\}.
    $$
    We have $\lim_{z\to \zeta\in\partial\Omega}g_\Omega^m(z,a)=0$, $H_m(g_\Omega^m(.,a))=0$ in $\Omega\setminus\{a\}$ and $H_m(g^m_\Omega(.,a))=\dfrac{(\frac{n}{m}-1)^m}{m!(n-m)!}(2\pi)^n\delta_a$. (See \cite[Theorem A]{EZ} and \cite{WW}). Note that $g^m_\Omega(.,a)\in\mathcal{F}_m(\Omega)$. 
\begin{lem}\label{lemex}
    Let $\Omega$ be a bounded, $m$-hyperconvex domain in $\mathbb{C}^n$ and $w_0\in\partial\Omega$. Then there exists a $m$-polar
set $E\subset\Omega$ such that $\limsup_{w\to w_0}g^m_\Omega(z,w)=0$ for every $z \in\Omega\setminus E$.
\end{lem}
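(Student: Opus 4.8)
The plan is to follow the Wiklund--Hed strategy: fix one sequence of poles approaching $w_0$, build a single $m$-sh envelope out of the corresponding Green functions, show that this envelope is identically $0$, and then read off $E$ as the negligible (hence $m$-polar) set where the pointwise limit superior drops below the envelope. Concretely, I would fix an arbitrary sequence $w_k\to w_0$ with $w_k\in\Omega$ and set $u_k:=g^m_\Omega(\cdot,w_k)$. Each $u_k\le 0$ lies in $\mathcal{F}_m(\Omega)$ and carries the fixed total mass $\int_\Omega H_m(u_k)=\frac{(\frac nm-1)^m}{m!(n-m)!}(2\pi)^n=:c$, independent of $k$. Since $z\mapsto -\|z-w_k\|^{2-\frac{2n}{m}}$ is itself an admissible competitor in the supremum defining $g^m_\Omega$, one has $u_k(z)\ge -\|z-w_k\|^{2-\frac{2n}{m}}$; hence on every $K\Subset\Omega$, once the poles have left $K$ (which happens for large $k$ because $w_k\to w_0\in\partial\Omega$), the $u_k$ are uniformly bounded, so $\{u_k\}$ is relatively compact in $L^1_{loc}(\Omega)$.

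Now set $\Phi(z):=\limsup_{w\to w_0}g^m_\Omega(z,w)\le 0$, and form the regularized envelope $G:=(\limsup_k u_k)^*$. Because the sequence $(w_k)$ tends to $w_0$, one has $\Phi\ge\limsup_k u_k$ pointwise, and $\limsup_k u_k\le G$. Consequently $E=\{\Phi<0\}\subseteq\{\limsup_k u_k<G\}$, which is a negligible set for the family $\{u_k\}$ and therefore $m$-polar by the $m$-subharmonic analogue of the Bedford--Taylor theorem on negligible sets. Thus the whole statement reduces to the single assertion $G\equiv 0$: granting it, any $z\notin E$ satisfies $\limsup_k u_k(z)=0$, whence $\Phi(z)\ge 0$ and so $\Phi(z)=0$. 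To control $G$ I would write $s_j:=(\sup_{k\ge j}u_k)^*$, so that $s_j\downarrow G$; each $s_j$ is $m$-sh, nonpositive, and sits above $u_j\in\mathcal{F}_m(\Omega)$, hence $s_j\in\mathcal{F}_m(\Omega)$, and the comparison principle gives $\int_\Omega H_m(s_j)\le \int_\Omega H_m(u_k)=c$ for $k\ge j$. By Lemma~\ref{lem1} (with $f=0$) the decreasing limit $G$ belongs to $\mathcal{F}_m(\Omega)\subset\mathcal{N}_m(\Omega)$, so $\widetilde G=0$; it therefore suffices to prove that $G$ is maximal, since then $G=\widetilde G=0$.

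The heart of the proof, and the step I expect to be the main obstacle, is thus $H_m(G)=0$. The mechanism must be the escape of mass to the boundary: on $\overline{\Omega}$ the measures $H_m(u_k)=c\,\delta_{w_k}$ converge weakly to $c\,\delta_{w_0}$, so they charge no compact $K\Subset\Omega$ for $k$ large. To convert this into maximality I would fix a ball $U$ with $K\Subset U\Subset\Omega$ and $w_0\notin\overline U$; on $U$ the functions $u_k$ (for $k$ large) are uniformly bounded and maximal, the decreasing sequence $s_j$ is uniformly bounded, and $s_j\downarrow G$ in capacity on $U$, whence $H_m(s_j)\rightharpoonup H_m(G)$ there. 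The delicate point is to show that this weak limit vanishes, equivalently that $\int_U H_m(s_j)\to 0$. This is genuinely subtle, for two reasons: the Dirac masses have disappeared from the tails $\sup_{k\ge j}u_k$ (the pole of $u_k$ is not a pole of the envelope as soon as some $u_{k'}$, $k'\ge j$, is finite there), so one cannot track them directly, and a regularized supremum of maximal functions need not be maximal. I would overcome this by upgrading the $L^1_{loc}$-convergence $u_k\to G$ on $U$ to convergence in capacity, using the uniform bounds of the first paragraph, so that the Hessian operator is continuous along the sequence and $H_m(G)=\lim_k H_m(u_k)=0$ on $U$. Since $K\Subset\Omega$ is arbitrary this yields $H_m(G)=0$ on $\Omega$, hence $G\equiv 0$, and the proof closes.
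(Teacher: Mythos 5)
Your overall skeleton is the same as the paper's: fix a sequence of poles tending to $w_0$, form the upper envelope of the corresponding Green functions, prove that its regularization vanishes identically, and then obtain $E$ from the theorem that negligible sets of $m$-sh families are $m$-polar. (Incidentally, a single sequence does suffice, as you observe; the paper's intersection over all sequences is only cosmetic.) The reduction to ``$G\equiv 0$'' is fine, as is the membership $G\in\mathcal{F}_m(\Omega)$ via the mass bound and Lemma \ref{lem1}. The problem is that the one step you flag as ``the heart of the proof'' is exactly the step you do not prove, and the mechanism you propose for it does not work. First, you have not shown that the full sequence $u_k$ converges to $G$ in $L^1_{loc}$: relative compactness only gives subsequential limits, and any such limit is $\leq G$, with equality precisely when the sequence does not oscillate. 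Whether $g^m_\Omega(z,w_k)$ oscillates as $w_k\to w_0$ (i.e.\ whether $\liminf<\limsup$ on a non-$m$-polar set) is the very boundary-behaviour subtlety the lemma is designed to circumvent, so assuming it away is circular. Second, even granting $L^1_{loc}$ convergence of a uniformly bounded sequence of $m$-sh functions, one cannot ``upgrade'' this to convergence in capacity, nor conclude $H_m(u_k)\to H_m(G)$: the complex Monge--Amp\`ere (and Hessian) operator is discontinuous along $L^1_{loc}$-convergent, uniformly bounded sequences — this is Cegrell's classical discontinuity example. Uniform bounds buy you nothing here; there is no known implication of the form you invoke, so the proof has a genuine hole at its decisive point.

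The paper closes this hole with a completely different, and much more economical, device that never requires convergence of Hessian measures along the sequence of Green functions. Writing $h_k=\sup_{j\geq k}g^m_\Omega(\cdot,a_j)$, one has $h_k^*\geq g^m_\Omega(\cdot,a_k)$, and for a fixed test function $\varphi\in\mathcal{E}_m^0(\Omega)\cap\mathcal{C}(\bar\Omega)$ repeated integration by parts in $\mathcal{F}_m(\Omega)$ replaces, one factor at a time, each $dd^ch_k^*$ by $dd^cg^m_\Omega(\cdot,a_k)$, yielding
$$\int_\Omega-\varphi\,H_m(h_k^*)\;\leq\;\int_\Omega-\varphi\,H_m\big(g^m_\Omega(\cdot,a_k)\big)\;=\;-C_{n,m}\,\varphi(a_k)\;\longrightarrow\;-C_{n,m}\,\varphi(w_0)=0,$$
since $\varphi$ is continuous up to the boundary and vanishes there. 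Because $(h_k^*)_k$ is decreasing, the left-hand integrals increase in $k$, so they all vanish; hence $H_m(h_k^*)=0$, and maximality together with zero boundary values forces $h_k^*\equiv 0$. This is the idea missing from your proposal: the weighted mass of the envelope is dominated by the value of the test function at the pole itself, which kills the mass as the pole escapes to $w_0$. If you want to salvage your write-up, replace the capacity-convergence paragraph by this integration-by-parts estimate (applied to the tails $s_j$ in your notation); the rest of your argument then goes through.
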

\begin{proof} We proceed as in \cite{CCW}.
  First, let 
$\sigma=(a_j)$ be a sequence in $\Omega$ tending to $w_0\in\partial\Omega$. Define  $h_k(z)=\sup_{j\geq k}g^m_\Omega(z,a_j)$. Since $g^m_\Omega(.,a_k)\leq h_k$, then $h_k^*\in\mathcal{F}_m(\Omega)$ with $\lim_{z\to\zeta\in\partial\Omega}h^*_k(z)=0.$
 Let $\varphi\in\mathcal{E}_m^0(\Omega)\cap\mathcal{C}(\bar{\Omega})$. By integration by parts in $\mathcal{F}_m(\Omega)$ (See \cite[Theorem 3.16]{Ch2}), we get   
   $$
   \begin{array}{ll} 
\displaystyle\int_\Omega-\varphi (dd^ch_k^*)^m\wedge\beta^{n-m}&=\displaystyle\int_\Omega -h_k^*dd^c\varphi\wedge (dd^ch_k^*)^{m-1}\wedge \beta^{n-m}\\ &\leq \displaystyle\int_\Omega -g^m_\Omega(.,a_k)dd^c\varphi\wedge (dd^ch_k^*)^{m-1}\wedge \beta^{n-m}\\ & = \displaystyle\int_\Omega -\varphi dd^cg^m_\Omega(.,a_k)\wedge (dd^ch_k^*)^{m-1}\wedge \beta^{n-m}\\ &\leq \dots\\
&\leq \displaystyle\int_\Omega-\varphi (dd^cg^m_\Omega(.,a_k))^m\wedge\beta^{n-m}\\
&=-\displaystyle\frac{(\frac{n}{m}-1)^m}{m!(n-m)!}(2\pi)^n\varphi(a_k).
   \end{array}
   $$ Therefore, $\displaystyle\lim_{k\to+\infty}\int_\Omega-\varphi (dd^ch_k^*)^m\wedge\beta^{n-m}=0.$ But since $(h_k^*)_k$ is decreasing, then by integration by parts the sequence $\Big(\int_\Omega-\varphi (dd^ch_k^*)^m\wedge\beta^{n-m}\Big)_{k\in\mathbb{N}}$ is increasing. Hence $\int_\Omega-\varphi (dd^ch_k^*)^m\wedge\beta^{n-m}=0$ for all $k$. It follows from \cite[Lemma 3.10]{Ch2} that $H_m(h_k^*)=0$ and hence $h_k^*$ is maximal. Moreover, since $\lim_{z\to\zeta\in\partial\Omega}h^*_k(z)=0$, we see that $h_k^*=0$ on $\Omega$ for all $k$. Thus, for each $k$, there is a $m$-polar set $E_k$ such that $u_k\equiv 0$ on $\Omega\setminus E_k$. Let $E_\sigma=E_1$. Fix $k$ and $z\in\Omega\setminus E_\sigma$. Since $g_\Omega^m(z,a_j)<0$ for each $0\leq j<k$, then $h_k(z)=0$. We conclude that $\limsup_{j\to+\infty}g^m_\Omega(z,a_j)=0$ for every $z \in\Omega\setminus E_\sigma$. Now let $\Sigma$ denotes the set of all sequences in $\Omega$ tending to $w_0$. Take $E=\bigcap_{\sigma\in\Sigma} E_\sigma$, then the set $E$ has the required properties.
\end{proof}
Let $u\in\mathcal{SH}_m(\Omega)$ and $a\in\Omega$. The Lelong number of the $m$-sh function $u$ at
point $a$ is defined by :
$$\nu_u(a)=\lim_{r\to 0}\frac{(\frac{n}{m}-1)^{m-1}}{r^{\frac{2n(m-1)}{m}}}\int_{||z-a||\leq r}dd^cu\wedge\beta^{n-1}.
$$ The number $\nu_u(a)$ exists and is nonnegative (See \cite{WW} and \cite{BG}).
  \begin{thm}
      Let $\Omega$ be a $m$-hyperconvex domain, then there is a function $u\in\mathcal{N}_m(\Omega)\setminus \mathcal{F}_m(\Omega)$ such that $u$ has no subextension.
  \end{thm}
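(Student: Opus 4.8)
The plan is to construct an explicit function built out of the $m$-Green functions supplied by Lemma \ref{lemex} and then to argue, via Lelong numbers, that any hypothetical subextension would have to carry positive mass at a boundary accumulation point, which is impossible. I would fix a boundary point $w_0 \in \partial\Omega$ together with a sequence $(a_j) \subset \Omega$ with $a_j \to w_0$, and consider a series of the form
\begin{equation*}
u(z) = \sum_{j=1}^{\infty} c_j\, g^m_\Omega(z,a_j),
\end{equation*}
where the positive coefficients $c_j$ are chosen to decay fast enough that the series converges in $L^1_{loc}(\Omega)$ (hence defines an $m$-sh function by Proposition item $3)$ applied to partial sums, or directly by summability) but slowly enough relative to the poles that the total Hessian mass $\int_\Omega H_m(u)$ is infinite. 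Since each $g^m_\Omega(\cdot,a_j)$ lies in $\mathcal{F}_m(\Omega)$ with a single Dirac mass at $a_j$, the candidate $u$ is a negative $m$-sh function whose Hessian measure is a countable sum of point masses accumulating only at $w_0 \in \partial\Omega$.

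Next I would verify the two membership assertions. To show $u \in \mathcal{N}_m(\Omega)$, I would use Lemma \ref{lemex}: off the $m$-polar set $E$ one has $\limsup_{w \to w_0} g^m_\Omega(z,w) = 0$, so the tail of the defining series tends to $0$ off an $m$-polar set, which should let me identify $\widetilde{u} = 0$ (the smallest maximal majorant vanishes because the singularities escape to the boundary), placing $u$ in $\mathcal{N}_m(\Omega)$ by the definition in the excerpt. To show $u \notin \mathcal{F}_m(\Omega)$, I would exploit the fact that membership in $\mathcal{F}_m(\Omega)$ forces $\int_\Omega H_m(u) < +\infty$ (via the sup over the defining $\mathcal{E}^0_m$ sequence), and arrange the coefficients $c_j$ so that $\int_\Omega H_m(u) = \sum_j c_j^m \cdot \tfrac{(\frac{n}{m}-1)^m}{m!(n-m)!}(2\pi)^n = +\infty$; the separation of this from finiteness is exactly what distinguishes $\mathcal{N}_m$ from $\mathcal{F}_m$.

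The crux is the non-subextension argument. Suppose for contradiction that some $\tilde u \in \mathcal{SH}_m(\tilde\Omega)$ with $\Omega \subset \tilde\Omega$ and $\tilde u \le u$ exists. The key invariant is the Lelong number: since $\tilde u \le u$ and both are $m$-sh, comparison of Lelong numbers gives $\nu_{\tilde u}(a_j) \ge \nu_u(a_j) \ge c_j \,\nu_{g^m_\Omega(\cdot,a_j)}(a_j)$, and the Green function has a strictly positive Lelong number at its pole. Thus $\tilde u$ carries positive Lelong number at every $a_j$, and these points accumulate at $w_0$, which is now an \emph{interior} point of $\tilde\Omega$ (or at least a point of $\tilde\Omega$ once $\tilde\Omega$ strictly contains $\Omega$ near $w_0$). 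I would then invoke the standard fact that the set $\{z : \nu_{\tilde u}(z) \ge c\}$ is discrete for each $c>0$ (an $m$-sh analogue of Siu's semicontinuity / the discreteness of high Lelong-number loci), or more elementarily that an $m$-sh function cannot have infinitely many points of Lelong number bounded below accumulating at an interior point without being $\equiv -\infty$ there. This contradiction rules out the subextension. The main obstacle I anticipate is making this last step fully rigorous in the $m$-subharmonic (rather than plurisubharmonic) setting: one must either cite or establish the upper semicontinuity and interior discreteness of Lelong super-level sets for $m$-sh functions, and ensure the coefficients $c_j$ are tuned so the lower bound on $\nu_{\tilde u}(a_j)$ does not degenerate to zero as $j \to \infty$. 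Choosing $c_j$ bounded below by a positive constant (while keeping $\sum c_j$ summable against the $L^1$ norm of $g^m_\Omega(\cdot,a_j)$, which itself tends to $0$) is the delicate balancing act, and this is where I would spend the most care.
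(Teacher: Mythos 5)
Your construction is essentially the paper's (a series $u=\sum_j c_j\, g^m_\Omega(\cdot,a_j)$ with poles $a_j\to w_0\in\partial\Omega$ supplied by Lemma \ref{lemex}), but the crux of your non-subextension argument rests on a claim that is false, and this is a genuine gap. With coefficients $c_j$ merely bounded below by a constant $c>0$, the contradiction you want — that a subextension $v$ cannot have $\nu_v(a_j)\geq c$ at infinitely many points accumulating at the interior point $w_0$ — fails even in the plurisubharmonic case: $v=\log|z_1|$ has Lelong number at least $1$ at every point of the hyperplane $\{z_1=0\}$, and these points accumulate at interior points while $v\not\equiv-\infty$. Siu's theorem only asserts that the super-level sets $\{\nu_v\geq c\}$ are analytic, not discrete (they are discrete only when $n=1$), and for $m$-subharmonic functions with $m<n$ no Siu-type structure theorem is available at all; so neither of the two facts you propose to invoke exists. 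The repair is to make the Lelong numbers blow up rather than stay bounded below: take $c_j=j$. Then any subextension $v\leq u$ satisfies $\nu_v(a_j)\geq\nu_u(a_j)\geq jC_{n,m}\to+\infty$ (your monotonicity of Lelong numbers under $v\leq u$ is fine and is the same comparison the paper uses), and the \emph{upper semicontinuity} of the Lelong number for $m$-sh functions, \cite[Corollary 1]{BG}, forces $\nu_v(w_0)=+\infty$, contradicting the finiteness of Lelong numbers of $m$-sh functions that are not identically $-\infty$. Upper semicontinuity is a much softer tool than discreteness of super-level sets, but it only produces a contradiction because the numbers $\nu_v(a_j)$ are unbounded.

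Once you are forced into unbounded coefficients, the membership claims become the delicate part, and your treatment of them is too loose. Convergence and nontriviality of the series require choosing the poles so that $g^m_\Omega(z_0,a_j)>-j^{-3}$ at some fixed $z_0$ outside the $m$-polar set of Lemma \ref{lemex} (giving $u(z_0)\geq-\sum_j j^{-2}>-\infty$), and membership in $\mathcal{E}_m(\Omega)$ — which is needed before one can even speak of $\mathcal{N}_m(\Omega)$, since $\mathcal{N}_m(\Omega)\subset\mathcal{E}_m(\Omega)$ by definition — requires the further quantitative condition $\rho(a_j)>-j^{-2m-1}$ on an exhaustion function $\rho$, so that the weighted masses $\int_\Omega-\rho\,H_m(u_N)$ of the partial sums stay bounded via the Minkowski-type inequality of \cite[Corollary 1]{HZ}; only then does \cite[Theorem 4.6]{V} (a countable sum of $\mathcal{F}_m$-functions that is not identically $-\infty$) yield $u\in\mathcal{N}_m(\Omega)$. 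Your proposed shortcut, identifying $\widetilde{u}=0$ because ``the singularities escape to the boundary,'' skips exactly these steps, and they are where the sequence $(a_j)$ must be tuned. The parts you do have right are the exclusion $u\notin\mathcal{F}_m(\Omega)$ from $\int_\Omega H_m(u)=+\infty$, and the overall architecture of the counterexample.
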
  
  \begin{proof}
    Let $w_0\in\partial\Omega$ and  $\rho\in\mathcal{E}_m^0(\Omega)\cap\mathcal{C}(\bar{\Omega})$ be an exhaustive defining function of $\Omega$. By Lemma \ref{lemex}, there exists a $m$-polar set $E$ such that $\limsup_{w\to w_0}g^m_\Omega(z,w)=0$ for every $z \in\Omega\setminus E$. Take $z_0\in\Omega\setminus E$. Then we can find a sequence $(a_j)_j\subset\Omega$ such that $a_j$ tends to $w_0$ as $j\to +\infty,$ $\rho(a_j)>-j^{-2m-1}$ and $g_\Omega^m(z_0,a_j)>-j^{-3}$. Let $u_N=\sum_{j=1}^{N} jg_\Omega^m(.,a_j)$. Then $u_N\in \mathcal{F}_m(\Omega)$. Put $$u(z):=\lim_{N\to +\infty} u_N(z)=\sum_{j=1}^{+\infty} jg_\Omega^m(z,a_j).$$
    Since  $u_N\searrow u$ and $ u(z_0)=\sum_{j=1}^{+\infty} jg_\Omega^m(z_0,a_j)>\sum_{j=1}^{+\infty}-j^{-2}>-\infty,$ then $u\in{SH}^{-}_m(\Omega)$ with $u\not\equiv -\infty.$ Suppose that $u$ has a subextension function $v$ to a domain $\tilde{\Omega}\supset\Omega$ containing  $w_0$. Since $v\leq u$ and $\nu_{u}(a_j)\geq jC_{n,m}$, then $\lim_{j\to+\infty}\nu_v(a_j)=+\infty.$ Moreover by \cite[Corollory 1]{BG}, the Lelong number is upper semi-continuous, then $\nu_v(w_0)=+\infty$. This
is a contradiction since the Lelong number of an $m$-subharmonic function is finite.\\ Now, we must prove that $u\in\mathcal{N}_m(\Omega)\setminus \mathcal{F}_m(\Omega)$. Let $\rho\in\mathcal{E}_m^0(\Omega)\cap\mathcal{C}(\bar{\Omega})$ be an exhaustive function of $\Omega$ and $W\subset\subset\Omega$. Put $$v_N=\sup\{\varphi\in\mathcal{SH}_m(\Omega)\;\; \varphi\leq \max\{u_N,-N\}\;\mbox{on}\; W\}.$$ Then $v_N\in \mathcal{E}_m^0(\Omega)$, $v_N\geq\max\{u_N,-N\}$,  $v_N\searrow u$ on $W$ and $H_m(v_N)$ is supported in $\overline{W}$. Let $A=(\inf_{\overline{W}}-\rho)^{-1}$. Therefore, by \cite[Lemma 2.14]{G2} and \cite[Corollary 1]{HZ} we get
$$\begin{array}{ll}\displaystyle\sup_N\int_\Omega H_m(v_N)&\leq 
\displaystyle \sup_NA\int_\Omega -\rho H_m(v_N)\\ &\displaystyle\leq \sup_NA\int_\Omega-\rho H_m(\max\{u_N,-N\})\\ &\displaystyle\leq \sup_NA\int_\Omega -\rho H_m(u_N)\\
&\displaystyle\leq \sup_NA\Big[\sum_{j=1}^N\Big(\int_\Omega -\rho jH_m(g_\Omega^m(z,a_j))\Big)^{\frac{1}{m}}\Big]^{m}\\
&=\displaystyle\sup_NA \Big[\sum_{j=1}^N\Big(\int_\Omega -\rho j\frac{(\frac{n}{m}-1)^m}{m!(n-m)!}(2\pi)^n\delta_{a_j}\Big)^{\frac{1}{m}}\Big]^{m}\\
&= \displaystyle\sup_NA \Big[\sum_{j=1}^N\Big( - j\frac{(\frac{n}{m}-1)^m}{m!(n-m)!}(2\pi)^n\rho(a_j)\Big)^{\frac{1}{m}}\Big]^{m}\\
&\leq AC_{n,m}(2\pi)^n\lim_N\displaystyle \Big[\sum_{j=1}^N \frac{1}{j^2}\Big]^{m}\\
&\displaystyle= AC_{n,m}\frac{2^n}{6^m}\pi^{n+2m}\\
&<+\infty.
\end{array}
$$
Thus $u\in\mathcal{E}_m(\Omega)$. Moreover, $u$ is in the form $\sum_{j}^{+\infty}v_j$ with $v_j\in\mathcal{F}_m(\Omega)$. We can then apply \cite[Theorem 4.6]{V} to get $u\in\mathcal{N}_m(\Omega)$. But since $\int_\Omega H_m(u)=+\infty$, then $u\not\in\mathcal{F}_m(\Omega).$
\end{proof}
\section{Approximation in the class $\mathcal{F}_m(\Omega,f)$}\label{sec4}
In this section we prove the approximation Theorem.
\begin{thm}\label{app}
Let $\Omega\subset\Omega_{j+1}\subset\Omega_j$
be $m$-hyperconvex domains such that $\lim_{j\to +\infty}\mathrm{cap}_m(\Omega_j,K)=\mathrm{cap}_m(\Omega,K)$ for all compact subset
$K\subset\Omega$, and $g\in \mathcal{MSH}_m^{-}(\Omega_1)$. Then, to every function
$u\in \mathcal{F}_m(\Omega,g_{|\Omega})$, such that
$\int_{\Omega}H_m(u) < +\infty$,
there exists an increasing sequence of functions $u_j\in\mathcal{F}(\Omega_j,g_{|\Omega_j})$ such that
$\lim_{j\to +\infty} u_j=u$ almost everywhere on $\Omega$.
\end{thm}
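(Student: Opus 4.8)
The plan is to build the approximating sequence by subextending $u$ to each of the larger domains $\Omega_j$ and then to identify the increasing limit with $u$ through a mass/comparison argument; the capacity hypothesis will enter only via the approximation theorem of \cite{PD}, which I use to manufacture lower barriers on the $\Omega_j$.

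First I would fix, from the definition of $\mathcal{F}_m(\Omega,g_{|\Omega})$, a function $\varphi\in\mathcal{F}_m(\Omega)$ with $\varphi+g_{|\Omega}\le u\le g_{|\Omega}$ on $\Omega$; since $\varphi\in\mathcal{F}_m(\Omega)$ its total Hessian mass is finite. For each $j$ I then subextend $u$ from $\Omega$ to $\Omega_j$: taking a decreasing sequence $(u^{(k)})\subset\mathcal{E}_m^0(\Omega,g_{|\Omega})$ from Lemma \ref{lem1}, applying Lemma \ref{lem2} to each $u^{(k)}$, and passing to the decreasing limit in $k$ exactly as in Step 1 of the proof of Theorem \ref{extension} with $\tilde\Omega=\Omega_j$, I obtain $u_j\in\mathcal{F}_m(\Omega_j,g_{|\Omega_j})$ with $u_j\le u$ on $\Omega$ and $H_m(u_j)\le\mathds{1}_\Omega H_m(u)$. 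This step uses only $g_{|\Omega_j}\in\mathcal{MSH}_m^-(\Omega_j)$, which holds by restriction. The key structural point is monotonicity in $j$: each subextension of $u^{(k)}$ is a supremum of competitors on $\Omega_j$, and any competitor on $\Omega_j$ restricts to a competitor on the smaller domain $\Omega_{j+1}$, whence $\tilde u_{j,k}\le\tilde u_{j+1,k}$ on $\Omega_{j+1}$; this survives the limit in $k$, so $u_j\le u_{j+1}$ on $\Omega_{j+1}$, and in particular on $\Omega$. Thus $(u_j)$ increases on $\Omega$ to some $U\le u$.

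Next I would verify that $U\in\mathcal{F}_m(\Omega,g_{|\Omega})$, and here the capacity hypothesis is decisive. Applying the approximation theorem of \cite{PD} to $\varphi\in\mathcal{F}_m(\Omega)$ produces an increasing sequence $\varphi_j\in\mathcal{F}_m(\Omega_j)$ with $\varphi_j\uparrow\varphi$ almost everywhere on $\Omega$; in particular $\varphi_j\le\varphi$ on $\Omega$. Then $\varphi_j+g_{|\Omega_j}$ is $m$-sh on $\Omega_j$, is $\le g$ there, and on $\Omega$ satisfies $\varphi_j+g\le\varphi+g\le u$, so it is an admissible competitor in the subextension defining $u_j$; consequently $u_j\ge\varphi_j+g$ on $\Omega_j$. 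Letting $j\to+\infty$ gives $U\ge\varphi+g$ almost everywhere on $\Omega$, so $\varphi+g\le U\le g$ and hence $U\in\mathcal{F}_m(\Omega,g_{|\Omega})$ with the correct boundary behaviour.

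Finally I would identify $U$ with $u$ by a rigidity argument. Since $u_j\uparrow U$ on $\Omega$ with $H_m(u_j)\le\mathds{1}_\Omega H_m(u)$, the weak continuity of the Hessian operator along increasing sequences gives $H_m(U)\le H_m(u)$ on $\Omega$ and $\int_\Omega H_m(U)=\lim_j\int_\Omega H_m(u_j)\le\int_\Omega H_m(u)$. On the other hand $U\le u$ with both functions in $\mathcal{F}_m(\Omega,g_{|\Omega})$, so the comparison principle forces $\int_\Omega H_m(U)\ge\int_\Omega H_m(u)$. Hence the total masses agree and, being finite, the domination $H_m(U)\le H_m(u)$ upgrades to $H_m(U)=\mathds{1}_\Omega H_m(u)=H_m(u)$ on $\Omega$. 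Then $U\le u$, $H_m(U)=H_m(u)$ and $\int_\Omega H_m(U)<+\infty$, so \cite[Theorem 2.10]{G1} yields $U=u$ almost everywhere on $\Omega$, as required. I expect the main obstacle to be the boundary-value control of the third paragraph: without the capacity condition the subextensions could fail to descend to $\varphi+g$ near $\partial\Omega$, so that $U$ would leave the class $\mathcal{F}_m(\Omega,g_{|\Omega})$ and the comparison principle would no longer apply. It is precisely to exclude this that \cite{PD}, and hence the hypothesis $\lim_{j\to+\infty}\mathrm{cap}_m(\Omega_j,K)=\mathrm{cap}_m(\Omega,K)$, is needed.
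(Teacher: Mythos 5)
Your proposal is correct and follows essentially the same route as the paper's own proof: you construct the $u_j$ as the maximal subextensions coming from Step 1 of Theorem \ref{extension}, apply the approximation theorem of \cite{PD} to the potential $\varphi\in\mathcal{F}_m(\Omega)$ to get the lower barriers $\varphi_j+g\le u_j$ that keep the increasing limit inside $\mathcal{F}_m(\Omega,g_{|\Omega})$, and then identify the limit with $u$ by combining $H_m(U)\le \mathds{1}_\Omega H_m(u)$ (weak convergence along increasing sequences, \cite[Theorem 3.8]{HP}) with the reverse total-mass inequality from $U\le u$ (\cite[Lemma 2.14]{G2}) and the identity principle \cite[Theorem 2.10]{G1}. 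The only (welcome) difference is that you make explicit the monotonicity of $(u_j)$ in $j$ via the restriction-of-competitors argument, a point the paper uses but leaves implicit.
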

\begin{proof}
    Let $\{\Omega_j\}$ be a decreasing sequence of $m$-hyperconvex domains containing $\Omega$,
 $g$ be a negative function in $\mathcal{MSH}_m(\Omega_1)$,  and  let  $u\in \mathcal{F}_m(\Omega,f)$  such that
 $\int_{\Omega}H_m(u) < +\infty$ where $f=g_{|\Omega}$. Let $f_j=g_{|\Omega_j}$. Since $u\in\mathcal{F}_m(\Omega,f)$,
then there exists $\psi\in\mathcal{F}_m(\Omega)$ such that $f\geq u\geq \psi+ f$. \\
It follows from \cite[Theorem 4.1. $(v)\to (ii)$]{PD} that there exists an increasing sequence
$(\psi_j)_j$ such that $\psi_j\in\mathcal{F}_m(\Omega_j)$ and $\lim_j\psi_j=\psi$ quasi everywhere on $\Omega$. Moreover, it follows from from steps 1 in the proof of Theorem \ref{extension}, that 
the functions $u_j$ defined by
$$
u_j:=\sup\{\varphi\in \mathcal{E}_m(\Omega_j)/\; \varphi\leq f_j\;\mbox{on}\;\Omega_j\;\,\mbox{and}\;\varphi\leq u\;\mbox{on}\;\Omega\}$$
belongs to $u_j\in \mathcal{F}_m(\Omega_j,f_j)$, with $u_j\leq u$ and $H_m(u_j)\leq \mathds{1}_{\Omega}H_m(u)$. Since $\psi_j+f_j\leq f_j$
and $\psi_j+f_j\leq \psi+f\leq u$ on  $\Omega$, then $
  \psi_j+f_j\leq u_j\leq f_j $ for all $j$. Put $h=(\lim_ju_j)^*$. From above $h\in\mathcal{F}_m(\Omega,f)$
  and since $(u_j)_j$ is increasing, then by \cite[Theorem 3.8]{HP} we get $H_m(h)\leq H_m(u)$.
    Moreover, since $h\leq u$ and $\int_\Omega H_m(h)<+\infty$,
    then by \cite[Lemma 2.14]{G2}, we have $\int_\Omega H_m(u)\leq\int_\Omega H_m(h)$ hence
    $ H_m(h)= H_m(u)$. Finally it follows from Theorem \ref{31}  that $h=u$ and the Theorem is proved.
\end{proof}
In the following example, which is proven in \cite[Theorem 4.3]{PD}, we give a sufficient condition for $\Omega$ to get Theorem \ref{app}.
\begin{exa}
  Let $\Omega\subset\subset\mathbb{C}^n$ be a strongly $m$-hyperconvex domain and $\{\Omega_j\}$ be a
decreasing sequence of $m$-hyperconvex domains such that $\Omega=\big(\bigcap_j\Omega_j\big)^{\circ}$, then $\lim_{j\to +\infty}\mathrm{cap}_m(\Omega_j,K)=\mathrm{cap}_m(\Omega,K)$ for all compact subset $K\subset\Omega$. 
 \end{exa}
In the following theorem, we give another example of domain $\Omega$ in which the condition of Theorem \ref{app} is satisfied. A bounded domain $\Omega$ in $\mathbb{C}^n$ is called $B_m$-regular if for every continuous function $\varphi$ on $\partial\Omega$ we can find a function $u\in\mathcal{SH}_m(\Omega)\cap\mathcal{C}(\bar{\Omega})$ such that $u_{|\partial\Omega}=\varphi$.
\begin{thm}\label{example}
    Let $\Omega$ be the intersection of a finite number of bounded $B_m$-regular domains with $\mathcal{C}^1$-smooth boundary and $\{\Omega_j\}$ be a
decreasing sequence of $m$-hyperconvex domains such that $\bar{\Omega}=\bigcap_j\Omega_j$, then $\lim_{j\to +\infty}\mathrm{cap}_m(\Omega_j,K)=\mathrm{cap}_m(\Omega,K)$ for all compact subset $K\subset\Omega$.
\end{thm}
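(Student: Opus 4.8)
The plan is to reduce the statement to a convergence property of relative extremal functions. For a compact $K\Subset D$ in an $m$-hyperconvex domain $D$, write $u_{K,D}$ for the relative $m$-extremal function, i.e. the upper regularization of $\sup\{v\in\mathcal{SH}_m(D):v\le 0,\ v\le -1\text{ on }K\}$, and recall that $\mathrm{cap}_m(D,K)=\int_D H_m(u_{K,D})$ with $H_m(u_{K,D})$ carried by $K$. Since the restriction of an admissible competitor from a larger domain to a smaller one is again admissible, $D\mapsto\mathrm{cap}_m(D,K)$ is nonincreasing; as $\Omega\subset\Omega_{j+1}\subset\Omega_j$, the sequence $\mathrm{cap}_m(\Omega_j,K)$ is therefore nondecreasing and bounded above by $\mathrm{cap}_m(\Omega,K)$. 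Hence $L:=\lim_j\mathrm{cap}_m(\Omega_j,K)$ exists and $L\le\mathrm{cap}_m(\Omega,K)$, and the whole content of the theorem is the reverse inequality.

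For the reverse inequality I would work with $u_j:=u_{K,\Omega_j}$. Because $\Omega\subset\Omega_j$ and the $\Omega_j$ decrease to $\bar\Omega$, restriction shows $u_j\le u_{K,\Omega}=:u_0$ on $\Omega$ and that $(u_j)_j$ is increasing; set $U:=(\lim_j u_j)^*$ on $\Omega$. Then $U$ is $m$-sh, $-1\le U\le 0$, $U=-1$ quasi-everywhere on $K$, and, being an increasing limit of functions maximal off $K$, $U$ is maximal on $\Omega\setminus K$, i.e. $H_m(U)=0$ there (using continuity of $H_m$ along increasing sequences of bounded $m$-sh functions). By the fundamental convergence theorem, $H_m(u_j)\to H_m(U)$ weakly, and since all these measures are carried by the fixed compact $K\Subset\Omega$ one gets $\mathrm{cap}_m(\Omega_j,K)=\int H_m(u_j)\to\int H_m(U)$. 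Thus the theorem is reduced to the single identity $U=u_0$ on $\Omega$, which in turn—since $U\le u_0$, both are maximal on $\Omega\setminus K$ and both equal $-1$ on $K$—reduces by the comparison principle to matching boundary values, namely to showing that $U(z)\to 0$ as $z\to\partial\Omega$.

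I expect this boundary identification to be the main obstacle, and it is exactly where the hypotheses on $\Omega$ enter. First, a finite intersection of $B_m$-regular domains with $\mathcal{C}^1$ boundary is again $B_m$-regular, so $u_0$ is continuous on $\bar\Omega$ with $u_0=0$ on $\partial\Omega$; this gives $\limsup_{z\to\partial\Omega}U\le 0$ for free. The real work is the lower bound: fix $\zeta_0\in\partial\Omega$ and note that, since $\bar\Omega=\bigcap_j\Omega_j$, one has $\mathrm{dist}(\zeta_0,\partial\Omega_j)\to 0$, so $\zeta_0$ becomes a near-boundary point of $\Omega_j$. I would construct a local barrier adapted to $\Omega$: using the $\mathcal{C}^1$-smoothness of the finitely many defining hypersurfaces through $\zeta_0$ together with the $B_m$-regularity of the corresponding factor domains, build a continuous $m$-sh function on a full neighborhood $N$ of $\zeta_0$ in $\C^n$ that peaks at $\zeta_0$ and is negative on $\bar\Omega\setminus\{\zeta_0\}$; combining the finitely many such barriers (by maximum or sum) yields a barrier for $\Omega$ whose constants are uniform on $\partial\Omega$ by compactness. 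Comparing $u_j$ with a suitably scaled barrier on $\Omega_j\cap N$ and using the maximality of $u_j$ off $K$ then forces $u_j(\zeta_0)\ge -\varepsilon_j$ with $\varepsilon_j\to 0$ uniformly in $\zeta_0$; letting $j\to\infty$ gives $U\ge 0$ on $\partial\Omega$, hence $U(z)\to 0$ as $z\to\partial\Omega$. With boundary values matched, the comparison principle yields $U=u_0$, and the capacity convergence follows from the reduction above. The delicate points to verify are the uniformity of the barrier constants across the corner set where the $\mathcal{C}^1$ hypersurfaces meet, and the quantitative control of $u_j$ near $\zeta_0$ by $\mathrm{dist}(\zeta_0,\partial\Omega_j)$; these are precisely what the $\mathcal{C}^1$-smoothness and $B_m$-regularity assumptions are designed to supply.
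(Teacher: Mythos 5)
Your reduction is sound as far as it goes: monotonicity of $\mathrm{cap}_m$ in the domain, the increasing limit $U:=(\lim_j u_{K,\Omega_j})^*$, maximality of $U$ off $K$, convergence of the masses via weak convergence of measures supported in the fixed compact $K$, and the reduction of the whole theorem to the identity $U=u_{K,\Omega}$, i.e.\ to showing $U(z)\to 0$ as $z\to\partial\Omega$. This skeleton is essentially what \cite[Theorem 4.1]{PD} packages (it is the equivalence that the paper simply invokes). The genuine gap is in the one step that carries all the difficulty: the boundary identification.

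The barrier you propose cannot exist in the form you need, and $B_m$-regularity does not supply anything close to it. Since the comparison with $u_j=u_{K,\Omega_j}$ must take place on $N\cap\Omega_j$, which contains points \emph{outside} $\bar\Omega$, your barrier $\phi$ has to be $m$-subharmonic on a full neighborhood $N$ of $\zeta_0$ in $\C^n$; but $B_m$-regularity only produces barriers defined \emph{inside} $\Omega$, and says nothing about $m$-sh functions living across $\partial\Omega$. Worse, there is a maximum-principle obstruction: if $\phi$ is $m$-sh (hence subharmonic) on $N$, with $\phi(\zeta_0)=0$ and $\phi<0$ on $\bar\Omega\cap N\setminus\{\zeta_0\}$, the sub-mean value inequality forces $\phi>0$ at points outside $\bar\Omega$ arbitrarily close to $\zeta_0$ --- that is, exactly in the collar $\Omega_j\setminus\bar\Omega$, and in particular possibly on $N\cap\partial\Omega_j$, which is precisely the portion of $\partial(N\cap\Omega_j)$ where your comparison needs $\phi\le\liminf u_j=0$. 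So the comparison does not close; repairing it would require showing $\sup_{N\cap\partial\Omega_j}\phi\to 0$, i.e.\ a local Hausdorff-type convergence of $\bar\Omega_j$ to $\bar\Omega$, which does not follow formally from $\bigcap_j\Omega_j=\bar\Omega$ --- and, before any of that, an actual construction of $\phi$, which $\mathcal{C}^1$-smoothness plus $B_m$-regularity are not known to provide. The paper crosses the boundary in a completely different way: the hypotheses on $\Omega$ are used (i) to get continuity of the ball-extremal function $u=u_{m,B,\Omega}$ on $\bar\Omega$ (\cite[Proposition 3.2]{Blo}, \cite{ACH}), and then (ii) to invoke the outer-approximation theorem \cite[Corollary 1.9]{DHTO}, producing continuous $m$-sh functions $u_k$ on \emph{neighborhoods of} $\bar\Omega$ decreasing to $u$, uniformly by Dini; these global functions play the role of your barriers, squeezing the maximal subextensions $v_{j_k}$ between $u_k-\varepsilon$ and $u$, which verifies condition (iii) of \cite[Theorem 4.1]{PD} and hence the capacity convergence (iv). In short: your plan would amount to re-proving an outer-approximation theorem of \cite{DHTO} type by hand; the pointwise two-sided barrier you posit is not available from the stated hypotheses and is in fact obstructed by the maximum principle.
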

\begin{proof}
  Let $B$ be a closed ball in $\Omega$. Then by \cite[Proposition 3.2]{Blo} (see also the Remark after \cite[Definition 2.4 and Theorem 2.5]{ACH}), the  extremal function $u:=u_{m,B,\Omega}\in\mathcal{E}_0(\Omega)\cap\mathcal{C}(\bar{\Omega})$. By \cite[Corollary 1.9]{DHTO}, there exists $(u_k)$ a sequence of continuous $m$-subharmonic functions on neighborhoods of $\bar{\Omega}$ such that $u_k\downarrow u$ on $\bar{\Omega}$. By Dini Theorem, this convergence is uniform. Take $\varepsilon>0$, then there exists $k_0$ such that for all $k\geq k_0$ we have $\sup_{\bar{\Omega}}|u-u_k|<\varepsilon.$ Since $\bar{\Omega}=\bigcap_j\Omega_j$, we can  take a large $j_k$ such that $u_k\in\mathcal{SH}_m(\Omega_{j_k})\cap\mathcal{C}(\bar{\Omega}_{j_k}).$ Put
  $$v_{j_k}(z)=\sup\{\varphi(z)\,:\, \varphi\in\mathcal{SH}^{-}_m(\Omega_{j_k})\; \mbox{and}\;
  \varphi\leq u\;\mbox{on}\;\Omega\}.
  $$ The sequence $(v_{j_k})$ is increasing and by the proof of Lemma \ref{lem2} we have $v_{j_k}\in\mathcal{E}_m^0(\Omega)$. Since $u_k-\varepsilon<u$ on $\Omega$, then $u_k-\varepsilon\leq v_{j_k}$. It follows that $u-\varepsilon\leq \lim v_{j_k}\leq u$ and hence $\lim v_{j_k}=u$. The desired conclusion  follows from \cite[Theorem 4.1 $(iii)\to (iv)$]{PD}.
\end{proof}

\begin{cor}
    Let $\Omega$ be a strictely $m$-pseudoconvex domain. Then there exists a sequence of $m$-hyperconvex domains $\Omega_j\supset\Omega_{j+1}\supset\Omega$ such that for $g\in \mathcal{MSH}_m^{-}(\Omega_1)$, every function $u\in\mathcal{F}_m(\Omega,g_{|\Omega})$ satisfying $\int_{\Omega}H_m(u) < +\infty$
 can be approximated almost everywhere on $\Omega$ by an increasing sequence of functions  $u_j\in\mathcal{F}_m(\Omega_j,g_{|\Omega_j})$.
\end{cor}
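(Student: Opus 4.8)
The plan is to deduce the statement from the approximation Theorem \ref{app}: all the analytic work is already contained there, so it suffices to manufacture, from a strict defining function of $\Omega$, a decreasing sequence of $m$-hyperconvex domains $\Omega_j\supset\Omega_{j+1}\supset\Omega$ whose intersection is $\bar\Omega$, and then to read off the capacity convergence from the Example preceding Theorem \ref{example}. Since $\Omega$ is strictly $m$-pseudoconvex there are a bounded open neighbourhood $\Omega'$ of $\bar\Omega$ and a smooth, strictly $m$-sh function $\rho$ on $\Omega'$ with $\Omega=\{z\in\Omega':\rho(z)<0\}$. In particular $\Omega$ is strongly $m$-hyperconvex, which will let us invoke the Example.

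First I would fix a bounded open set $U$ with $\bar\Omega\subset U\subset\subset\Omega'$ and, shrinking $U$ around $\bar\Omega$ if necessary, arrange that $\{\rho\le 0\}\cap U=\bar\Omega$ (this is where one uses that $\rho$ is a genuine strict defining function). Because $\partial U$ is compact and disjoint from $\{\rho\le 0\}$, there is $c>0$ with $\rho\ge c$ on $\partial U$. For each integer $j>1/c$ let $\Omega_j$ be the connected component containing $\Omega$ of the open set $\{z\in U:\rho(z)<1/j\}$. Each $\Omega_j$ is bounded with $\overline{\Omega_j}\subset U\subset\subset\Omega'$ (since $\rho\ge c>1/j$ keeps $\overline{\Omega_j}$ away from $\partial U$), and near $\overline{\Omega_j}$ it is cut out by the smooth strictly $m$-sh function $\rho-1/j$; hence each $\Omega_j$ is strictly $m$-pseudoconvex, in particular $m$-hyperconvex. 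Moreover $\Omega\subset\Omega_{j+1}\subset\Omega_j$, and since $\bar\Omega$ is connected and contained in each $\{\rho<1/j\}\cap U$ we get $\bar\Omega\subset\Omega_j$ for all $j$; combined with $\Omega_j\subset\{\rho<1/j\}\cap U$ this yields $\bigcap_j\Omega_j=\{\rho\le 0\}\cap U=\bar\Omega$ and therefore $\big(\bigcap_j\Omega_j\big)^{\circ}=(\bar\Omega)^{\circ}=\Omega$.

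Next, as $\Omega$ is strongly $m$-hyperconvex and $\Omega=\big(\bigcap_j\Omega_j\big)^{\circ}$, the Example preceding Theorem \ref{example} (that is, \cite[Theorem 4.3]{PD}) gives $\lim_{j\to+\infty}\mathrm{cap}_m(\Omega_j,K)=\mathrm{cap}_m(\Omega,K)$ for every compact $K\subset\Omega$. Thus the sequence $(\Omega_j)_{j>1/c}$, after reindexing so that it starts at $1$, satisfies every hypothesis of Theorem \ref{app}; for the given $g\in\mathcal{MSH}_m^{-}(\Omega_1)$ and any $u\in\mathcal{F}_m(\Omega,g_{|\Omega})$ with $\int_\Omega H_m(u)<+\infty$ that theorem produces an increasing sequence $u_j\in\mathcal{F}_m(\Omega_j,g_{|\Omega_j})$ with $\lim_j u_j=u$ almost everywhere on $\Omega$. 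This is exactly the claim, and the construction of $(\Omega_j)$ depends only on $\Omega$, as required.

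I expect the only real difficulty to be the geometric construction rather than any estimate: one must guarantee simultaneously that the sublevel sets $\{\rho<1/j\}$ stay relatively compact in $\Omega'$ (handled by the lower bound $c$ on $\partial U$) and that, after localizing to $U$, they decrease exactly to $\bar\Omega$ and not to a larger set $\{\rho\le 0\}$ possibly containing spurious zero-level components of $\rho$ away from the boundary. Once the normalization $\{\rho\le 0\}\cap U=\bar\Omega$ is secured, the corollary is a direct application of the Example and of Theorem \ref{app}; passing to connected components is harmless because $\Omega$ is connected and a component of a sublevel set of a smooth strictly $m$-sh function is again strictly $m$-pseudoconvex.
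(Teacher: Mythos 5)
Your overall strategy --- shifted sublevel sets $\{\rho<1/j\}$ of the strict defining function, capacity convergence via the Example (\cite[Theorem 4.3]{PD}), then Theorem \ref{app} --- is the same as the paper's, which takes $\Omega_j=\{z\in\Omega':\rho(z)<1/j\}$ and invokes Theorems \ref{app} and \ref{example}; your localization to $U$ and passage to connected components are genuine improvements in rigor. However, one step of your argument is false as stated: the ``normalization'' that, after shrinking $U$ around $\bar\Omega$, one can arrange $\{\rho\le 0\}\cap U=\bar\Omega$. This amounts to claiming that the spurious zero set $\{\rho=0\}\setminus\bar\Omega$ stays at positive distance from $\bar\Omega$, and a strict defining function need not have this property. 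Concretely, take $m=n=1$, let $\phi(t)=e^{-1/(t(1-t))}$ for $0<t<1$ and $\phi(t)=0$ otherwise (so $\phi$ is smooth and flat at $t=0,1$), and set $\rho(s+it)=\tfrac{c}{2}s^2-\delta\phi(t)$ with $\delta\Vert\phi''\Vert_\infty<c$, so that $\Delta\rho=c-\delta\phi''>0$ everywhere. Then $\rho$ is smooth and strictly subharmonic on $\C$, and $\Omega:=\{\rho<0\}=\{0<t<1,\ \tfrac{c}{2}s^2<\delta\phi(t)\}$ is a bounded domain, hence strictly $1$-pseudoconvex; yet $\{\rho\le0\}$ contains the two ``whiskers'' $\{s=0,\,t\le0\}$ and $\{s=0,\,t\ge1\}$, attached to $\bar\Omega$ at its two cusps. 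Every neighborhood $U$ of $\bar\Omega$ meets these whiskers, so $\{\rho\le0\}\cap U\supsetneq\bar\Omega$ no matter how $U$ is shrunk; worse, the points of the whiskers near the cusps lie in your $\Omega_j$ for \emph{every} $j$ (they are joined to $\Omega$ inside $\{\rho<1/j\}\cap U$ through the cusp, since $\rho=0$ along them), so $\bigcap_j\Omega_j\neq\bar\Omega$. The same example, with $|z_2|^2+\dots+|z_n|^2$ added, works for all $n$ and $m$. A second unjustified assertion in the same display is $(\bar\Omega)^{\circ}=\Omega$, which is false for general bounded domains.

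The gap is repairable without changing your plan, because the Example only requires $\Omega=\big(\bigcap_j\Omega_j\big)^{\circ}$, not $\bigcap_j\Omega_j=\bar\Omega$. Since $\bigcap_j\Omega_j\subset\{\rho\le0\}\cap U$, it suffices to prove that the interior of $\{\rho\le0\}$ is exactly $\Omega$, and this is where strictness really enters: if $\rho\le0$ on a ball $B(p,r)$ and $\rho(p)=0$, then $p$ is an interior maximum of the subharmonic function $\rho$, so the sub-mean value inequality forces $\rho\equiv0$ on $B(p,r)$, whence $\Delta\rho\equiv0$ there, contradicting $\Delta\rho>0$; thus every interior point of $\{\rho\le0\}$ lies in $\Omega$ (the same argument yields $(\bar\Omega)^{\circ}=\Omega$). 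With this replacement your proof closes: each $\Omega_j$ is $m$-hyperconvex because $\rho-1/j$ restricted to $\Omega_j$ is a negative continuous $m$-sh exhaustion (this is also safer than your claim that $\Omega_j$ is strictly $m$-pseudoconvex, which would need a neighborhood of $\overline{\Omega_j}$ meeting no other component of $\{\rho<1/j\}\cap U$), $\Omega$ is strongly $m$-hyperconvex, and $\Omega=\big(\bigcap_j\Omega_j\big)^{\circ}$, so \cite[Theorem 4.3]{PD} gives the capacity convergence and Theorem \ref{app} concludes. Note that your route through the Example is actually more robust than the paper's citation of Theorem \ref{example}, whose hypothesis $\bar\Omega=\bigcap_j\Omega_j$ is defeated by the same whisker example; so once the interior argument is inserted, your proof is the correct completion of the paper's sketch.
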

\begin{proof}
  Let $\rho$ be a smooth strictly $m$-sh function  on some open neighborhood $\Omega^{\prime}$ of $\bar \Omega$ such that $\Omega= \{z\in\Omega^{\prime}:\; \rho(z) < 0\}$. Take $\rho_j=\rho-\dfrac{1}{j}$ and $\Omega_j= \{z\in\Omega^{\prime}:\; \rho_j(z) < 0\}$. Then the Corollary follows from Theorem \ref{app} and Theorem \ref{example}.
\end{proof}

\noindent{\bf Acknowledgement}. We would like to thank  the referees
for their careful reading as well as for their comments and suggestions
that have contributed to improving the readability and quality of the
paper.
\author{ }
\date{September 2023}

\addcontentsline{toc}{chapter}{Bibliographie}

\end{document}